
\documentclass[preprint,11pt,authoryear]{elsarticle}

\usepackage[left=3.5cm, right=3.5cm, top=3.5cm, bottom=3.5cm]{geometry}




\usepackage{amsmath,amssymb}
\usepackage{hyperref}
\usepackage{appendix}
\usepackage{ascmac}
\usepackage{bm}
\usepackage{amsthm}
\usepackage{here}
\usepackage{natbib}
\usepackage{mathtools}

\usepackage{algorithmic}
\usepackage{algorithm}

\usepackage{mdframed}

\usepackage{booktabs}
\usepackage{multirow}


\usepackage{adjustbox}

\theoremstyle{definition}
\newtheorem{theorem}{Theorem}[section]
\newtheorem{proposition}[theorem]{Proposition}
\newtheorem{lemma}[theorem]{Lemma}
\newtheorem{corollary}[theorem]{Corollary}
\newtheorem{definition}[theorem]{Definition}
\newtheorem{remark}[theorem]{Remark}

\newcommand{\E}{\mathbb{E}}
\newcommand{\R}{\mathbb{R}}

\begin{document}

    \begin{frontmatter}



        \title{Precomputable Trade-off Between Error and Breakpoints in Piecewise Linearization for First-Order Loss Functions}


        \author[inst1]{Yotaro Takazawa}
        \ead{takazawa@meiji.ac.jp}
        \affiliation[inst1]{organization={School of Business Administration, Meiji University},
            addressline={1-9-1, Eifuku},
            city={Suginami-ku},
            state={Tokyo},
            country={Japan}}

        \begin{abstract}
            Stochastic optimization often involves calculating the expected value of a first-order max or min function, known as a first-order loss function.
            In this context, loss functions are frequently approximated using piecewise linear functions.
            Determining the approximation error and the number of breakpoints (segments) becomes a critical issue during this approximation.
            This is due to a trade-off: increasing the number of breakpoints reduces the error but also increases the computational complexity of the embedded model.
            As this trade-off is unclear in advance, preliminary experiments are often required to determine these values.

            The objective of this study is to approximate the trade-off between error and breakpoints in piecewise linearization for first-order loss functions.
            To achieve this goal, we derive an upper bound on the minimum number of breakpoints required to achieve a given absolute error.
            This upper bound can be easily precomputed once the approximation intervals and error are determined, and serves as a guideline for the trade-off between error and breakpoints.
            Furthermore, we propose efficient algorithms to obtain a piecewise linear approximation with a number of breakpoints below the derived upper bound.

        \end{abstract}

        \begin{keyword}
            stochastic programming\sep inventory, piecewise linear approximation
        \end{keyword}

    \end{frontmatter}



    \section{Introduction}
    Stochastic optimization often involves the calculation of an expected value of a max (min) function consisting of a decision variable and a random variable.
    One of the most fundamental functions among them can be expressed as a univariate function \( \ell_X: \mathbb{R} \rightarrow \mathbb{R} \),
    \begin{equation}
        \label{expectation_function}
        \ell_X(s) = \E [\max(a_1s+b_1X+c_1, a_2s+b_2X+c_2)],
    \end{equation}
    where \( X \) is a given one-dimensional random variable and \( a_i, b_i, c_i \in \mathbb{R} \) (\( i \in \{1, 2\} \)).
    This function is used in many areas, especially in inventory control.
    For example, let \( X \) be an uncertain demand for some product \( P \), and \( s \) be the ordering quantity for \( P \).
    Then \( \E[\max(X-s, 0)] \) and \( \E [\max(s- X, 0)] \) are considered as shortage and inventory costs in inventory control.
    These functions are known as first-order loss functions \citep{snyder2019fundamentals} and are used in inventory control and other applications \citep{rossi2014piecewise}.
    Thus, we call \( \ell_X \) a general (first-order) loss function, as it is a generalization of them. As another example, $\E [ \min(s, X)]$ is regarded as the expected number of units of product P sold, which is often used in a profit function in newsvendor models.

    While a general first-order loss function is often embedded in mixed-integer linear programming (MILP) models, calculating these values is a challenging task for the following reasons.
    When a target random variable \( X \) is continuous, these expectation functions are often nonlinear and thus difficult to embed in MILP.
    On the other hand, when \( X \) is discrete, these expectation functions can be directly embedded in MILP.
    However, when $X$'s support has a large cardinality or is infinite, solving optimization problems becomes challenging.

    For the above reasons, loss functions are often approximated by a piecewise linear function, which is a tractable format for MILP.
    In a piecewise linear approximation, we choose \( n \) points in ascending order, known as breakpoints.
    We approximate the loss function using linear segments connecting each breakpoint to its corresponding function value.

    Determining the appropriate parameters, such as the acceptable error and the number of breakpoints for piecewise linear functions, is a crucial aspect of this approach.
    While increasing the number of breakpoints can decrease the approximation error, it also increases the computational complexity of MILP.
    Therefore, we must set a careful balance when choosing these parameters.
    Often, the appropriate parameters are determined through preliminary and sometimes heavy numerical experiments, as the theoretical relationship between these parameters is not always clear.

    This study aims to understand the trade-off between error and breakpoints in a piecewise linear approximation for general first-order loss functions.
    To achieve this objective, we derive a tight upper bound for the minimum number of breakpoints needed to achieve a given error.
    We also propose an efficient method for constructing a piecewise linear function with a number of breakpoints below this upper bound.
    As a result, we enable the determination of appropriate levels of error and breakpoints, using this upper bound as a guideline.

    Specifically, we have obtained the following results in this study for a general loss function:
    \begin{enumerate}
        \item Given an approximation error \( \epsilon > 0 \), we propose algorithms to make a piecewise linear function
        such that the maximum absolute error is within \( \epsilon \)
        and the breakpoints are bounded by \( M\sqrt{\frac{W}{\epsilon}} \),
        where \( M \ (\leq 1) \) is a parameter dependent on the setting and \( W \) is the width of the approximation interval. Among the proposed algorithms, one guarantees minimality in the number of breakpoints,
        while the others, although not guaranteeing minimality, offer the same upper bound on the number of breakpoints. We also demonstrate that this upper bound on the number of breakpoints is tight from a certain perspective.

        \item Through computational experiments, we compare the actual number of breakpoints generated by our proposed algorithms
        with the derived bounds across various distributions.
        In many cases, we find that the minimal number of breakpoints can be approximated by \( \frac{1}{2\sqrt{2}}\sqrt{\frac{W}{\epsilon}} \).
    \end{enumerate}

    We review related approaches for piecewise linear approximation, specifically focusing on methods that provide theoretical guarantees in terms of approximation error or the number of breakpoints.
    These can be broadly divided into two categories based on whether they fix the error or the number of breakpoints. Note that the setting of our study falls under fixing the error.

    As the foundation of our research, we first introduce the important work by \cite{rossi2014piecewise}, which focuses on minimizing error in piecewise linear approximation for first-order loss functions with a fixed number of breakpoints.
    Their method divides the domain into \( n \) intervals and uses a new discrete random variable to calculate the conditional expectation for each interval.
    Based on this, various heuristics have been proposed for different distributions \citep{rossi2014piecewise-conf,rossi2015piecewise}.
    This approach is widely used, especially in the field of inventory management \citep{tunc2018extended,kilic2019heuristics,gutierrez2023stochastic,xiang2023mathematical}. While the method in \cite{rossi2014piecewise} focuses on guarantees for minimizing error, it does not fully explore the relationship between error and the number of breakpoints and is specifically tailored for normal distributions.
    In contrast, our research can determine the relationship between error and breakpoints a priori. Our study is applicable to both continuous and discrete distributions and can also be used for scenario reduction in scenario data.

    There is a history of research on minimizing error in piecewise linear approximation for convex functions with fixed breakpoints \citep{cox1971algorithm, gavrilovic1975optimal, imamoto2008recursive, liu2021optimal}, which serve as the basis for \cite{rossi2014piecewise}.
    Similar to \cite{rossi2014piecewise}, most of these works focus solely on minimizing error without delving into the relationship between error and the number of breakpoints. The sole exception is the study by \cite{liu2021optimal}, which provides the first trade-off between error and breakpoints.
    Although their study achieves an error analysis and trade-off similar to ours, it is difficult to apply to the loss function for the following two reasons:
    1.Their trade-off analysis relies on derivative information, making it inapplicable to general loss functions composed of discrete distributions.
    2.Their algorithm requires simplicity in the derivative form of the target function for error computation, making it challenging to directly apply to general loss functions.
     Additionally, most of the studies mentioned here require solving non-convex optimization problems, leaving the computational complexity unknown.
    In contrast, our research has the advantage of bounding the number of iterations of our algorithms by the number of breakpoints.

    Next, we review studies that aim to minimize the number of breakpoints given a specified allowable error, which, as stated in \cite{ngueveu2019piecewise}, are much fewer in number compared to the other setting. \cite{ngueveu2019piecewise} proposed a method for minimizing breakpoints that can be applied to a specific MILP. While their results are substantially different from ours, their motivation to focus on the trade-off between error and the number of breakpoints is similar to ours. Here, we discuss the study by \cite{rebennack2015continuous}, which is most closely related to our research.
    In \cite{rebennack2015continuous}, they formulated a MILP optimization problem to minimize the number of breakpoints for general univariate functions, given an allowable error \( \epsilon \).
    Additionally, they proposed a heuristic for determining an adjacent breakpoint such that the error within an interval does not exceed \( \epsilon \) when a specific breakpoint is given.
    Even though the heuristic algorithm's approach is similar to ours, it does not provide bounds on the number of breakpoints, making our results non-trivial.

    Although it is slightly outside the scope, we discuss the research on the scenario generation approach, which is another popular method for dealing with a random variable \( X \) in stochastic optimization. In the scenario generation approach, a simpler discrete random variable \( Y \) is generated to approximate a random variable \( X \), with each value of \( Y \) referred to as a scenario. Sample average approximation, such as Monte Carlo Sampling, is one of the most widely used methods in scenario generation \citep{shapiro2003monte}. Recall that our proposed method approximates \( X \) by a new discrete distribution \( \tilde{X} \) using conditional expectations based on \cite{rossi2014piecewise}. Thus, it can also be viewed as a scenario generation approach that guarantees the number of generated scenarios and the error. Note that, in existing studies in scenario generation, the function incorporating \( X \) (in our case, \( \ell_X \)) is not specified, and the error is evaluated using some form of distance, such as the Wasserstein distance, between the target random variable \( X \) and the generated variable \( Y \). This is significantly different from our study, which evaluates the absolute error between \( \ell_X \) and \( \ell_Y \) when the function is specified. For the main methods related to scenario generation, please refer to \cite{lohndorf2016empirical}.

    An overview and structure of this study can be found in the next section; please refer to it for details.

    \newpage

    \section{Research Setting and Outline}
    In this section, we outline the research, discuss its structure, and introduce the notation used throughout the study.

    \subsection{Scope and Setting}
    The following box summarizes the settings of this study.

    \begin{mdframed}
        \begin{description}
            \item [Input:]\leavevmode
            \begin{enumerate}
                \item a half open interval $(a, b] \subseteq \R$
                \item a one-dimensional random variable $X$
                \item a univariate function $f_X: (a, b] \rightarrow \R; \ f_X(s) = \E[ \min (s, X)]$
            \end{enumerate}
            \item [Task:]\leavevmode
            \begin{enumerate}
                \item Create a new discrete random variable $\tilde{X}$ from $X$ such that $f_{\tilde{X}}$ is a piece-wise linear approximation of $f_X$, which is based on \cite{rossi2014piecewise}.
                \item Analyze \( f_{\tilde{X}} \) based on the following evaluation criteria.
            \end{enumerate}
            \item [Evaluation:]\leavevmode
            \begin{enumerate}
                \item the absolute approximation error defined by
                $$
                e_{X, \tilde{X}} \coloneqq \max_{s \in (a,b]} |f_{\tilde{X}}(s) - f_X(s)|
                $$
                \item the number of the breakpoints of $\tilde{f}$
            \end{enumerate}
        \end{description}
    \end{mdframed}
    For the setting, the following two points need further clarification:
    \begin{enumerate}
        \item Target Function:\\
        We can show that a general loss function $\ell_X$ can be expressed as the sum of an affine function and $\E[\min(s, X)]$ (please refer to \ref{sec_reduction}). Without loss of generality, this reduction allows us to focus on the piecewise linear approximation of a simple function $f_X(s) := \E[\min(s, X)]$. Thus, in the rest of paper, we focus on $f_X$. We use the half-open interval \((a, b]\) as the approximation domain in order to simplify the discussion when dealing with discrete distributions.

        \item Method of Piecewise Linear Approximation: \\
        In this study, we employ the method proposed by \cite{rossi2014piecewise} for piecewise linear approximation. In their method, we transform the target random variable \(X\) into a discrete random variable \(\tilde{X}\) that takes on fewer possible values than \(X\) as follows. First, we divide $\R$ into $n$ consecutive regions $\mathcal{I} = (I_1, \dots, I_n)$. We construct $\tilde{X}$ from $\mathcal{I}$, which takes the conditional expectation $ \E[X \mid X \in I]$ with probability $P(X \in I)$ for each $I \in \mathcal{I}$.
        Then we consider the loss function \(f_{\tilde{X}}\) obtained by replacing \(X\) with \(\tilde{X}\) in \(f_X\), which can be shown to be a piecewise linear function with $n+1$ breakpoints. It implies that the piecewise linear function $f_{\tilde{X}}$ is uniquely determined once a partition $\mathcal{I}$ is fixed.
    \end{enumerate}

    \subsection{Structure of the Paper}
    The structure of the rest of this paper is as follows: In Section 3, we analyze the error \( e_{X, \tilde{X}} \) under the given partition \( \mathcal{I} \). In Section 4, we propose algorithms for generating a partition \( \mathcal{I} \) with an error less than \( \epsilon \) and derive upper bounds on the number of breakpoints for the induced piecewise linear functions. In Section 5, we derive lower bounds on the number of breakpoints based on the framework of our analysis. In Section 6, we compare the actual errors and the number of breakpoints with their theoretical values through numerical experiments and discuss the results. In Section 7, we give our conclusion.

    \subsection{Notation and Assumptions}
    We assume that all random variables treated in this paper are real-valued random variables with expected values. Random variables can be either continuous or discrete unless explicitly stated. For a random variable $X$, its distribution function is denoted by $p_X: \R \rightarrow [0,1]$. For a half-open interval $I=(a, b] \subseteq \mathbb{R}$, we define the part of expectation of $\E[X]$ as
    $$
    \E_{a,b}[X]  :=  \left \{\begin{array}{ll}
                                 \int_a^b \ p_X(x) x \, dx          & \text{if $X$ is continuous}, \\
                                 \sum_{x \in (a, b] \cap S} P(X=x)x & \text{if $X$ is discrete},   \\
    \end{array}
    \right .
    $$
    where $S$ is the support of $X$.

    \section{Analysis of Approximation Error}

    In this section, we introduce the piecewise linear method used in the study and evaluate the error in its approximation.

    \subsection{Piecewise Linear Approximation Framework}
    First, we introduce a piecewise linear approximation framework for the function $f_X(s) = \E[\min(s, X)]$ on some half interval $(a, b] \subseteq \mathbb{R}$, which is based on \cite{rossi2014piecewise}.

    Let $\mathcal{I}=(I_0, I_1, \dots I_n, I_{n+1})$ be a partition of $\R$ such that
    \begin{itemize}
        \item $I_j = (a_j, b_j]$ for $j \in \{0, 1, \dots, n, n+1\}$ such that $b_j = a_{j+1}$ for $j \in \{0, 1, \dots, n\}$
        \item $a_0 = - \infty$, $b_0 = a$, $a_{n+1}=b$ and $b_{n+1} = \infty$.
    \end{itemize}
    To simplify the discussion, we assume that for all $I \in \mathcal{I}$, $P(X \in I)$ is positive.
    We consider a new discrete random variable $\tilde{X}$ with $\mathcal{I}$ to approximate $X$ as follows.

    \begin{definition}
        \label{def_generation}
        A discrete random variable $\tilde{X}$ is said to be induced by a random variable $X$ with $\mathcal{I}$ if $\tilde{X}$ takes $ \E[X \mid X \in I_j ]$ with probability $P(X \in I_j)$  for $j \in \{0, 1, \dots, n, n+1\}$.
    \end{definition}

    With $\tilde{X}$ instead of $X$ of $f_X$, $f_{\tilde{X}}$ is written as
    \begin{equation}
        \label{def_f_tilde}
        f_{\tilde{X}}(s) \coloneqq \E [ \min(s, \tilde{X}) ] = \sum_{j=0}^{n+1} (P(X \in I_j) \cdot \min (s, \mu_j)),
    \end{equation}
    where for $j \in \{0, 1, \dots, n, n+1\}$ we define
    \begin{equation}
        \mu_j \coloneqq \E[X \mid X \in I_j].
    \end{equation}
    We easily show that $f_{\tilde{X}}$ is a continuous piecewise linear function as follows.
    \begin{proposition}
        $f_{\tilde{X}}$ on $(a, b]$ is a continuous piecewise linear function on with $n$ breakpoints.
    \end{proposition}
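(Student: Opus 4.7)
The plan is to read everything directly from the decomposition \eqref{def_f_tilde}, which already exhibits $f_{\tilde X}$ as a positive linear combination of the elementary functions $s \mapsto \min(s, \mu_j)$. Each such elementary map is continuous and piecewise linear with exactly one breakpoint, located at $s = \mu_j$ (slope $1$ to the left, slope $0$ to the right). Since continuous piecewise linear functions are closed under finite linear combinations, $f_{\tilde X}$ is automatically continuous and piecewise linear on all of $\R$, with its breakpoint set contained in $\{\mu_0, \mu_1, \dots, \mu_{n+1}\}$.

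Next I would localize the $\mu_j$ relative to the target interval $(a,b]$. Since $I_0 = (-\infty, a]$ we have $\mu_0 = \E[X \mid X \in I_0] \le a$, so the $j=0$ summand reduces to the constant $P(X \in I_0)\,\mu_0$ on $(a,b]$. Since $I_{n+1} = (b, \infty)$ we have $\mu_{n+1} > b$, so the $j=n+1$ summand reduces to the linear term $P(X \in I_{n+1})\,s$ on $(a,b]$. For $j \in \{1, \dots, n\}$, the inclusion $I_j = (a_j, b_j] \subseteq (a,b]$ forces $\mu_j \in (a, b]$. Thus on $(a,b]$ the only candidate breakpoints are $\mu_1, \dots, \mu_n$, giving at most $n$.

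Finally, I would verify that these candidates are genuinely $n$ distinct breakpoints. For distinctness, the ordering $\mu_j \le b_j = a_{j+1} \le \mu_{j+1}$ is strict in the relevant case: equality would require both conditional expectations to concentrate at the common endpoint $b_j$, which is ruled out by the half-open convention defining the $I_j$ together with the standing positivity assumption $P(X \in I_j) > 0$. For genuineness, the same positivity ensures that the slope of $f_{\tilde X}$ drops by exactly $P(X \in I_j) > 0$ at $s = \mu_j$, so no breakpoint is cancelled by the other summands. The only mildly delicate step is this positivity-plus-non-cancellation observation; continuity and the count of $n$ breakpoints then follow immediately from the explicit form of \eqref{def_f_tilde}.
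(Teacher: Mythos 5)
Your proof is correct and follows essentially the same route as the paper: both arguments read the piecewise linear structure directly off the decomposition \eqref{def_f_tilde}, the paper by writing out the affine formula on each segment $(\mu_i, \mu_{i+1}]$, you by viewing $f_{\tilde X}$ as a nonnegative combination of the kink functions $s \mapsto \min(s,\mu_j)$. The only added value in your write-up is the explicit verification that $\mu_0 \le a < \mu_1 < \dots < \mu_n \le b < \mu_{n+1}$ and that each slope genuinely drops by $P(X \in I_j) > 0$, so the count is exactly $n$ -- a point the paper's proof states without elaboration.
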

    \begin{proof}
        Assume that $\mu_i < s \leq \mu_{i + 1} $ for some $i \in \{0, 1, \dots, n\}$. Then we have
        $$
        f_{\tilde{X}}(s) = \sum_{j = 0}^i (P(X \in I_j) \cdot \mu_j) + s \cdot \sum_{j = i+1}^{n+1} P(X \in I_j).
        $$
        Therefore, $f_{\tilde{X}}$ is a continuous piecewise linear function, whose breakpoints are $\mu_1, \dots, \mu_n$.
    \end{proof}
    Based on the above results, the piecewise linear function $f_{\tilde{X}}$ is uniquely determined once a partition $\mathcal{I}$ of $(a, b]$ and the associated random variable $\tilde{X}$ have been specified.

    \subsection{Analysis of Approximation Error}

    Now, we evaluate the absolute error between the piecewise linear function $f_{\tilde{X}}$ and $f_X$ defined as
    \begin{equation}
        \label{def:approximation_error}
        e_{X, \tilde{X}} = \max_{s \in (a, b]} |f_{\tilde{X}}(s) - f_X(s)|.
    \end{equation}
    Since $(I_1, \cdots, I_n)$ is a partition of $(a, b]$, by taking the maximum of the errors in each region, $e_{X, \tilde{X}}$ is rewritten as
    \begin{align*}
        e_{X, \tilde{X}} &= \max_{j \in \{1,\dots, n\}} \max_{s \in I_j}|   f_{\tilde{X}}(s) - f_X(s)|\\
        &=  \max_{j \in \{1,\dots, n\}} \Delta_X(I_j),
    \end{align*}
    where we define
    \begin{equation}
        \label{def_delta}
        \Delta_X(I_j) \coloneqq \max_{s \in I_j}|   f_{\tilde{X}}(s) - f_X(s)| \quad (j \in \{1, \dots, n\}).
    \end{equation}
    \begin{remark}
        Due to strict concavity of $f_X$, \( e_{X, \tilde{X}} \) is minimized when all \( \Delta_X(I_j) \) have the same value \citep{imamoto2008recursive}.
        Utilizing this property, research led by \cite{rossi2014piecewise} and others employ an approach that solves a nonlinear optimization problem to make all \( \Delta_X(I_j) \) equal. These analyses are only valid for the optimal partition.
        In our study, we evaluate the approximation error for any partition for $(a, b]$, which makes it possible to evaluate the piecewise linear approximation function that provides a good but non-optimal approximation.
    \end{remark}

    From here on, we will evaluate \( \Delta_X(I_k) \) for fixed $k \in \{1, \dots n\}$. For $s \in (a, b]$, we transform \( f_X \) as follows.
    \begin{align*}
        f_X(s) &= \E[\min(s, X)] = \sum_{j=0}^{n+1}\E_{(a_j, b_j]}[\min (s, X)]\\
        &= \sum_{j=0}^{n+1}P(X \in I_j) \E [ \min (s, X_{I_j})],
    \end{align*}
    where we define the conditional random variable when $X \in I_j$ as
    \begin{equation}
        X_{I_j} \coloneqq X \mid X \in I_j \quad (j \in \{1, \dots, n\}).
    \end{equation}
    When $t \in I_k$,  for any $j \neq k$, we have
    $$
    \E [ \min(t, X_{I_j})] = \min(t, \mu_j)
    $$
    because we see
    $$
    \E [ \min(t, X_{I_j})] =
    \begin{cases}
        \mu_j \quad  (\leq b_j \leq b_k \leq t) & \text{if} \ j < k, \\
        t \quad (\leq b_k \leq a_j \leq  \mu_j) & \text{if} \ j > k.
    \end{cases}
    $$
    Comparing \( f_{\tilde{X}}(s) \) as defined in \eqref{def_f_tilde} with \( f_X \),
    each term corresponding to \( j \neq k \) has the same value in both \( f_X \) and \( f_{\tilde{X}} \). Thus, by subtracting \( f_X(t) \) from \( f_{\tilde{X}}(t) \), we obtain the following.
    \begin{equation*}
        f_{\tilde{X}}(t) - f_X(t) =  P(X \in I_k) (\min (t, \mu_k) - \min(t, \E[ \min (t, X_{I_k})] )) \geq 0,
    \end{equation*}
    where the last inequality is from Jensen's inequality since $f_{X_{I_k}}$ is proven to be concave in Lemma \ref{lemma:concave}. By substituting the above equation into the definition of \( \Delta_{X} \) given by \eqref{def_delta}, we obtain the following lemma.

    \begin{lemma}
        \label{lemma:delta}
        For each $j \in \{1, \cdots, n\}$,
        \begin{equation}
            \label{eq:delta}
            \Delta_X (I_j) = P(X \in I_j) \cdot \max_{s \in I_j} (   \min(s, \mu_j) -\E [ \min(s, X_{I_j})]    ).
        \end{equation}
    \end{lemma}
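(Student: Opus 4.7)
My approach is to leverage the decomposition of $f_{\tilde{X}} - f_X$ that is essentially completed in the paragraphs immediately preceding the lemma statement. Almost all of the computation is already there; what remains is to justify the pointwise sign claim, drop the outer $\min(s, \cdot)$ that surrounds the conditional expectation, and then substitute into the definition \eqref{def_delta} of $\Delta_X(I_j)$.

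Fix $k \in \{1, \ldots, n\}$ and $t \in I_k$. First, I would expand $f_X(t)$ by conditioning on $\mathcal{I}$, writing $f_X(t) = \sum_{j=0}^{n+1} P(X \in I_j)\, \E[\min(t, X_{I_j})]$, and compare term-by-term with \eqref{def_f_tilde}. For every $j \neq k$ the elementary case analysis already spelled out in the text ($I_j$ lies entirely to the left or entirely to the right of $t$) yields $\E[\min(t, X_{I_j})] = \min(t, \mu_j)$, so those terms cancel against the corresponding summands of $f_{\tilde{X}}(t)$, leaving only the $j = k$ contribution. Next, since $\min(t, X_{I_k}) \leq t$ almost surely, we have $\E[\min(t, X_{I_k})] \leq t$, which collapses $\min(t, \E[\min(t, X_{I_k})])$ to $\E[\min(t, X_{I_k})]$ and strips the extraneous outer minimum. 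For non-negativity I would apply Jensen's inequality to the concave map $x \mapsto \min(t, x)$ (equivalently, invoke the concavity of $f_{X_{I_k}}$ from Lemma \ref{lemma:concave}) to obtain $\E[\min(t, X_{I_k})] \leq \min(t, \mu_k)$, hence
\begin{equation*}
f_{\tilde{X}}(t) - f_X(t) = P(X \in I_k)\bigl(\min(t, \mu_k) - \E[\min(t, X_{I_k})]\bigr) \geq 0.
\end{equation*}

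Because the expression inside the absolute value is non-negative throughout $I_k$, I can drop the absolute value in \eqref{def_delta} and maximize over $s \in I_k$, which produces precisely the identity claimed in the lemma. The only real obstacle I anticipate is bookkeeping at the boundary blocks $I_0 = (-\infty, a]$ and $I_{n+1} = (b, \infty)$: I would want to check that the conditional means $\mu_0, \mu_{n+1}$ appearing in the $j \neq k$ cancellation are well-defined under the paper's standing expectation assumption, and that the "$j < k$ vs.\ $j > k$" dichotomy still produces the clean identity $\E[\min(t, X_{I_j})] = \min(t, \mu_j)$ at the extremes. Everything beyond this is direct substitution.
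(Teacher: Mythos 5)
Your proposal is correct and follows essentially the same route as the paper: conditioning $f_X$ on the partition, cancelling the $j \neq k$ terms via the same case analysis, invoking Jensen's inequality through the concavity of $x \mapsto \min(t,x)$ (Lemma \ref{lemma:concave}) for non-negativity, and then dropping the absolute value in \eqref{def_delta}. Your extra observation that $\E[\min(t, X_{I_k})] \leq t$ collapses the redundant outer minimum, and your check of the boundary blocks $I_0$, $I_{n+1}$, are sensible tidying of details the paper leaves implicit.
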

    We can show that the maximum value in $\Delta_X(I_j)$ is attained at $s = \E[X \mid X \in I_j]$ and its value is calculated as follows, which is proved in the next subsection. The relationship is illustrated in Figure~\ref{fig_1}.

    \begin{figure}[ht]
        \centering 
        \includegraphics[width=0.7\textwidth]{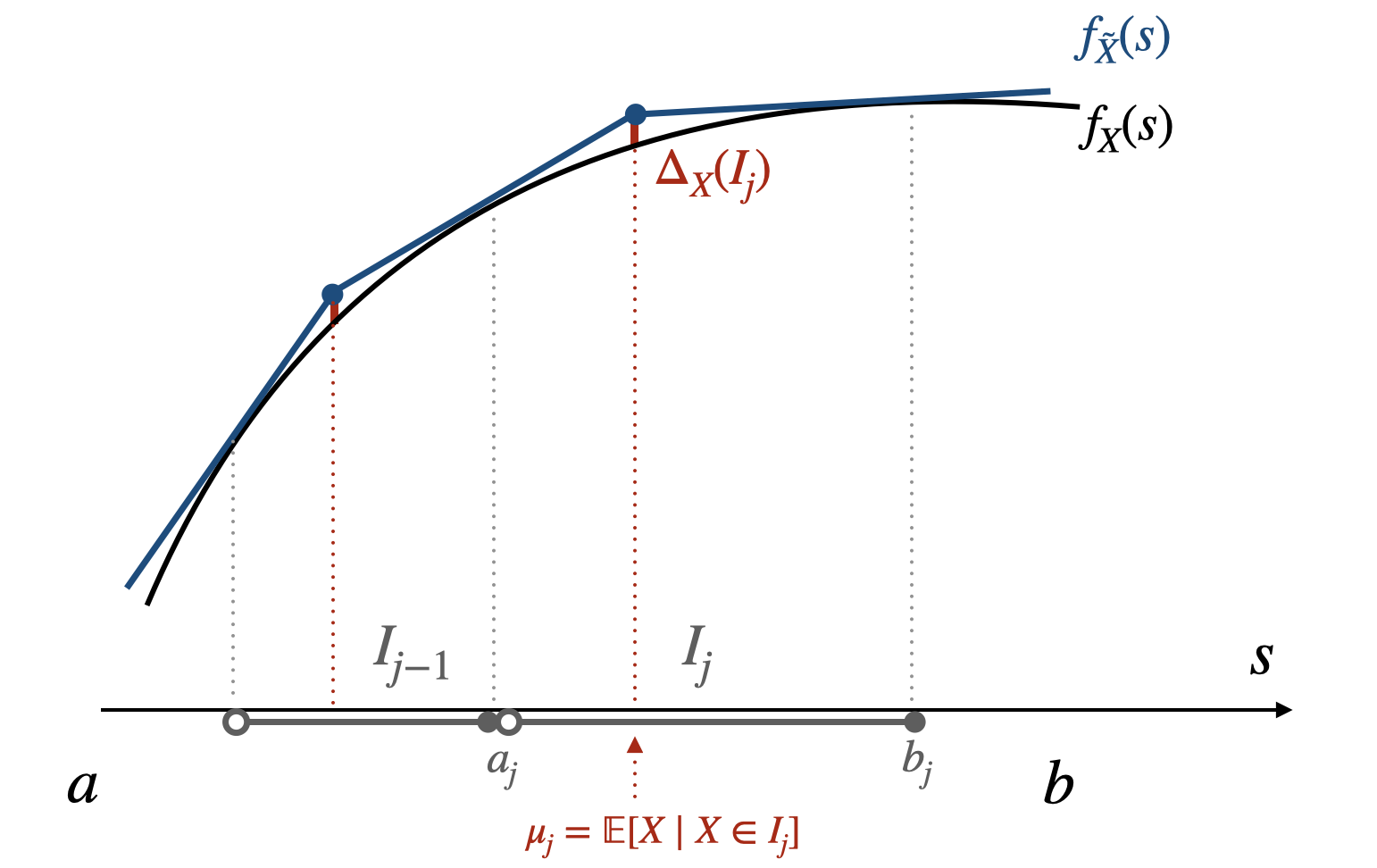} 
        \caption{Relationship of $f_X$ and $f_{\tilde{X}}$} 
        \label{fig_1} 
    \end{figure}

    \begin{lemma}
        \label{lemma:one-break}
        For an interval $I_j = (a_j, b_j]$,
        $$
        \max_{s \in I_j}  (   \min(s, \mu_j) -\E [ \min(s, X_{I_j})] )  = \E_{a,\mu_j}[\mu_j -X_{I_j}] \leq \frac{b_j-a_j}{4}.
        $$
    \end{lemma}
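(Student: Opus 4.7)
The plan is to break the lemma into three pieces: first, show that $g(s) := \min(s, \mu_j) - \E[\min(s, X_{I_j})]$ is maximized over $s \in I_j$ at $s = \mu_j$; second, evaluate $g(\mu_j)$ to recover the stated expression; third, establish the bound $\frac{b_j - a_j}{4}$. Throughout, I will set $h(s) := \E[\min(s, X_{I_j})]$ and use that $h$ is non-decreasing and $1$-Lipschitz, since $\min(s, x)$ is $1$-Lipschitz and non-decreasing in $s$ for every fixed $x$ (these properties hold uniformly in the continuous and discrete cases, so no separate treatment is needed).

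For monotonicity, I would split $I_j$ at $\mu_j$. On $(a_j, \mu_j]$ we have $g(s) = s - h(s)$, which is non-decreasing because $h$ is $1$-Lipschitz. On $[\mu_j, b_j]$ we have $g(s) = \mu_j - h(s)$, which is non-increasing because $h$ is non-decreasing. Hence the maximum is attained at $s = \mu_j$. Using $\mu_j = \E[X_{I_j}]$, I would then compute
$$
g(\mu_j) = \E[X_{I_j}] - \E[\min(\mu_j, X_{I_j})] = \E[(X_{I_j} - \mu_j)^{+}] = \E[(\mu_j - X_{I_j})^{+}],
$$
where the last equality uses $\E[X_{I_j} - \mu_j] = 0$. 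Since $(\mu_j - X_{I_j})^{+}$ vanishes outside $\{X_{I_j} \leq \mu_j\}$, this quantity coincides with $\E_{a_j, \mu_j}[\mu_j - X_{I_j}]$, matching the statement.

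The main obstacle, and the most interesting part, is the clean constant $\frac{1}{4}$ in the upper bound. I plan to derive it by combining two elementary one-sided estimates. Writing $q := P(X_{I_j} > \mu_j)$ and $A := g(\mu_j)$, and using the two representations of $A$ above together with the fact that $X_{I_j} \in (a_j, b_j]$, I get
$$
A = \E\bigl[(\mu_j - X_{I_j}) \mathbf{1}_{\{X_{I_j} \leq \mu_j\}}\bigr] \leq (1-q)(\mu_j - a_j),
$$
and, symmetrically, $A \leq q(b_j - \mu_j)$. Multiplying the two inequalities yields
$$
A^{2} \leq q(1-q)(\mu_j - a_j)(b_j - \mu_j) \leq \tfrac{1}{4} \cdot \tfrac{(b_j - a_j)^{2}}{4},
$$
where the last step uses $q(1-q) \leq \tfrac{1}{4}$ and AM-GM applied to $(\mu_j - a_j) + (b_j - \mu_j) = b_j - a_j$. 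Taking square roots gives $A \leq \tfrac{b_j - a_j}{4}$, completing the proof.
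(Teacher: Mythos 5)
Your proof is correct, but it reaches the bound by a genuinely different route than the paper. For the location of the maximum, the paper computes $\delta_Y(s)$ explicitly as partial expectations ($\E_{a,s}[s-Y]$ on $(a,\mu]$ and $\E_{s,b}[Y-s]$ on $(\mu,b]$) and checks monotonicity by a direct increment comparison, whereas you get the same conclusion more cleanly from the fact that $h(s)=\E[\min(s,X_{I_j})]$ is non-decreasing and $1$-Lipschitz; your identity $g(\mu_j)=\E[(X_{I_j}-\mu_j)^{+}]=\E[(\mu_j-X_{I_j})^{+}]$ matches the paper's $\E_{a_j,\mu_j}[\mu_j-X_{I_j}]$. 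For the constant $\tfrac14$, the paper derives the \emph{exact} value $\delta_Y(\mu)=\frac{(\mu-\mu_A)(\mu_B-\mu)}{\mu_B-\mu_A}$ with $\mu_A=\E[Y\mid Y\le\mu]$, $\mu_B=\E[Y\mid Y>\mu]$ (via $P_A+P_B=1$, $P_A\mu_A+P_B\mu_B=\mu$), maximizes the quadratic to get $\tfrac{\mu_B-\mu_A}{4}$, and then uses $\mu_B-\mu_A\le b_j-a_j$; you instead multiply the two one-sided estimates $A\le(1-q)(\mu_j-a_j)$ and $A\le q(b_j-\mu_j)$ and finish with $q(1-q)\le\tfrac14$ and AM--GM. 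Your argument is more elementary and avoids introducing the conditional means, at the cost of losing the exact formula, which the paper reuses later (Remark 3.8) to justify the heuristic $\delta_Y(\mu)\approx\tfrac{b-a}{8}$ behind the $B_{1/8}$ bound; your intermediate bound $\sqrt{q(1-q)(\mu_j-a_j)(b_j-\mu_j)}$ is a different (incomparable) refinement. One small point worth stating explicitly: $\mu_j=\E[X_{I_j}]\in(a_j,b_j]$ since $X_{I_j}$ takes values in $(a_j,b_j]$, so the maximizer $s=\mu_j$ indeed lies in $I_j$ and the supremum is attained.
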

    From the above lemma, we have the following theorem, where the error in each interval can be analytically computed and is bounded by a value proportional to the product of the length of each region and the probability within that region.

    \begin{theorem}
        \label{theorem:delta}

        For a interval $I_j = (a_j, b_j] \ (j \in \{1, \dots,n\})$,        $$
        \Delta_X(I_j)  = \E_{a_j,\mu_j}[\mu_j -X] \leq P(X \in I_j) \cdot \frac{b_j -a_j}{4}.
        $$
    \end{theorem}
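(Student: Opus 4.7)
The plan is to chain the two preceding lemmas and then rewrite the result in terms of the unconditional random variable $X$ using the definition of $X_{I_j}$.

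First I would invoke Lemma~\ref{lemma:delta} to obtain
\[
\Delta_X(I_j) = P(X \in I_j) \cdot \max_{s \in I_j}\bigl(\min(s,\mu_j) - \E[\min(s, X_{I_j})]\bigr),
\]
and then directly apply Lemma~\ref{lemma:one-break} to rewrite the inner maximum, yielding
\[
\Delta_X(I_j) = P(X \in I_j) \cdot \E_{a_j,\mu_j}[\mu_j - X_{I_j}] \le P(X \in I_j) \cdot \frac{b_j - a_j}{4}.
\]
This already produces the bound on the right-hand side of the theorem, so the only remaining task is to verify the equality $P(X \in I_j)\cdot \E_{a_j,\mu_j}[\mu_j - X_{I_j}] = \E_{a_j,\mu_j}[\mu_j - X]$.

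To verify this equality I would expand both sides using the definition of $\E_{a,b}[\,\cdot\,]$ given in Section~2.3. In the continuous case, since $X_{I_j}$ has density $p_{X_{I_j}}(x) = p_X(x)/P(X \in I_j)$ on $I_j$, I would write
\[
P(X \in I_j) \cdot \E_{a_j,\mu_j}[\mu_j - X_{I_j}] = P(X \in I_j) \int_{a_j}^{\mu_j} \frac{p_X(x)}{P(X \in I_j)} (\mu_j - x)\,dx = \int_{a_j}^{\mu_j} p_X(x)(\mu_j - x)\,dx,
\]
which matches $\E_{a_j,\mu_j}[\mu_j - X]$. The discrete case is analogous, replacing the integral by a sum over $(a_j, \mu_j] \cap S$ and using $P(X_{I_j} = x) = P(X = x)/P(X \in I_j)$.

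I do not anticipate any serious obstacle: the whole argument is essentially bookkeeping that converts the conditional-expectation form of the error into an expression involving the restricted expectation $\E_{a_j,\mu_j}[\cdot]$ applied to the original variable $X$. The only point requiring a little care is making sure the density/pmf rescaling by $P(X \in I_j)$ cancels cleanly in both the continuous and discrete cases, which is exactly what the definition of $\E_{a,b}[\cdot]$ in Section~2.3 is designed to accommodate.
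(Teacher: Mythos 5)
Your proposal is correct and follows essentially the same route as the paper: chain Lemma~\ref{lemma:delta} with Lemma~\ref{lemma:one-break} to get $\Delta_X(I_j) = P(X \in I_j)\,\E_{a_j,\mu_j}[\mu_j - X_{I_j}] \leq P(X \in I_j)\cdot\frac{b_j-a_j}{4}$, then cancel the factor $P(X \in I_j)$ against the conditional density/pmf to rewrite this as $\E_{a_j,\mu_j}[\mu_j - X]$. The only difference is that you spell out the rescaling identity that the paper merely asserts, which is a harmless (and welcome) addition.
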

    \begin{proof}

        From the inequality in Lemma \ref{lemma:one-break} and Lemma \ref{lemma:delta}, we have
        $$
        \Delta_{X}(I_j) \leq P(X\in I_j) \cdot \frac{b_j -a_j}{4}.
        $$
        Next, we show the equality. From the equality in Lemma \ref{lemma:one-break} and Lemma \ref{lemma:delta}, we have
        $$
        \Delta_{X}(I_j) = P(X \in I_j) \cdot \E_{a_j, \mu_j}[\mu_j - X_{I_j}].
        $$
        Since
        $$
        \E_{a_j, \mu_j}[\mu_j - X_{I_j}] = \frac{\E_{a_j, \mu_j}[\mu_j - X]}{P(X \in I_j)},
        $$
        we obtain
        $$
        \Delta_X(I_j) =  \E_{a_j, \mu_j}[\mu - X].
        $$

    \end{proof}

    We finally derive the following result.
    \begin{corollary}
        \label{theorem:approx}
        \begin{equation}
            e_{X, \tilde{X}} =  \max_{j \in \{1, \dots n\} } \E_{a_j,\mu_j}[\mu_j - X]\leq \max_{j \in \{1, \dots n\}} \left(P(X \in I_j) \cdot \frac{b_j - a_j}{4}\right).
        \end{equation}
    \end{corollary}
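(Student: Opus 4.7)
The plan is to assemble this corollary directly from pieces already established in the subsection, with essentially no new work required. The strategy has three ingredients.

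First, I would recall the rewriting of the global error obtained just before Lemma \ref{lemma:delta}, namely
\begin{equation*}
e_{X,\tilde{X}} \;=\; \max_{j \in \{1,\dots,n\}} \max_{s \in I_j} |f_{\tilde{X}}(s) - f_X(s)| \;=\; \max_{j \in \{1,\dots,n\}} \Delta_X(I_j),
\end{equation*}
which follows because $(I_1,\dots,I_n)$ partitions $(a,b]$ so the supremum over $(a,b]$ can be split interval by interval.

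Second, I would invoke Theorem \ref{theorem:delta}, which gives both an exact formula and a clean upper bound for each individual $\Delta_X(I_j)$:
\begin{equation*}
\Delta_X(I_j) \;=\; \E_{a_j,\mu_j}[\mu_j - X] \;\leq\; P(X \in I_j)\cdot \frac{b_j - a_j}{4}.
\end{equation*}
Substituting the equality into the expression for $e_{X,\tilde{X}}$ yields the claimed equality $e_{X,\tilde{X}} = \max_j \E_{a_j,\mu_j}[\mu_j - X]$, and substituting the inequality (using monotonicity of the $\max$ operator over a finite index set) yields the claimed upper bound $\max_j \bigl(P(X \in I_j)\cdot(b_j-a_j)/4\bigr)$.

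There is no genuine obstacle here: both the identity $e_{X,\tilde{X}} = \max_j \Delta_X(I_j)$ and the per-interval analysis in Theorem \ref{theorem:delta} have already been carried out, so the proof reduces to a one-line substitution followed by taking the maximum over $j$. The proof can thus be presented in just two or three lines.
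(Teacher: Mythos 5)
Your proposal is correct and follows exactly the route the paper intends: the corollary is an immediate consequence of the decomposition $e_{X,\tilde{X}} = \max_{j} \Delta_X(I_j)$ established just before Lemma \ref{lemma:delta}, combined with the per-interval equality and bound of Theorem \ref{theorem:delta}, with the maximum taken over the finite index set. The paper gives no separate proof precisely because the argument reduces to this substitution, so nothing is missing.
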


    \subsection{Proof of Lemma \ref{lemma:one-break}: Approximation Error of Piecewise Linear Approximation with One Breakpoint}

    In this subsection, we provide a proof of Lemma \ref{lemma:one-break}. Let $Y$ be a real-valued random variable whose support is a subset of $(a, b] \subseteq \mathbb{R}$. Let $\mu$ be the mean value of $Y$, that is, $\mu := \E[Y]$. Consider the piecewise linear approximation function to $f_Y(s) = \E[\min(s, Y)]$, whose breakpoint is only the mean value $\E[Y]$. Then, this function is $\min(s, \E[Y])$ and we define its approximation error at a point $s \in (a, b]$ as
    \begin{equation}
        \delta_Y (s) \coloneqq |\min(s, \E[Y]) - f_Y(s)|.
    \end{equation}
    To show the claim of Lemma \ref{lemma:one-break}, it suffices to show that the maximum value of $\delta_Y(s)$ is equal to $\E_{a,\mu}[\mu -Y]
    $ and it is bounded by $\frac{b-a}{4}$. The following lemma shows that $\delta_Y(s)$ attains its maximum value at $s=\mu$ and its maximum value is  $\E_{a,\mu}[\mu -Y]$. The relationship is illustrated in Figure~\ref{fig_2}.

    \begin{figure}[ht]
        \centering 
        \includegraphics[width=0.7\textwidth]{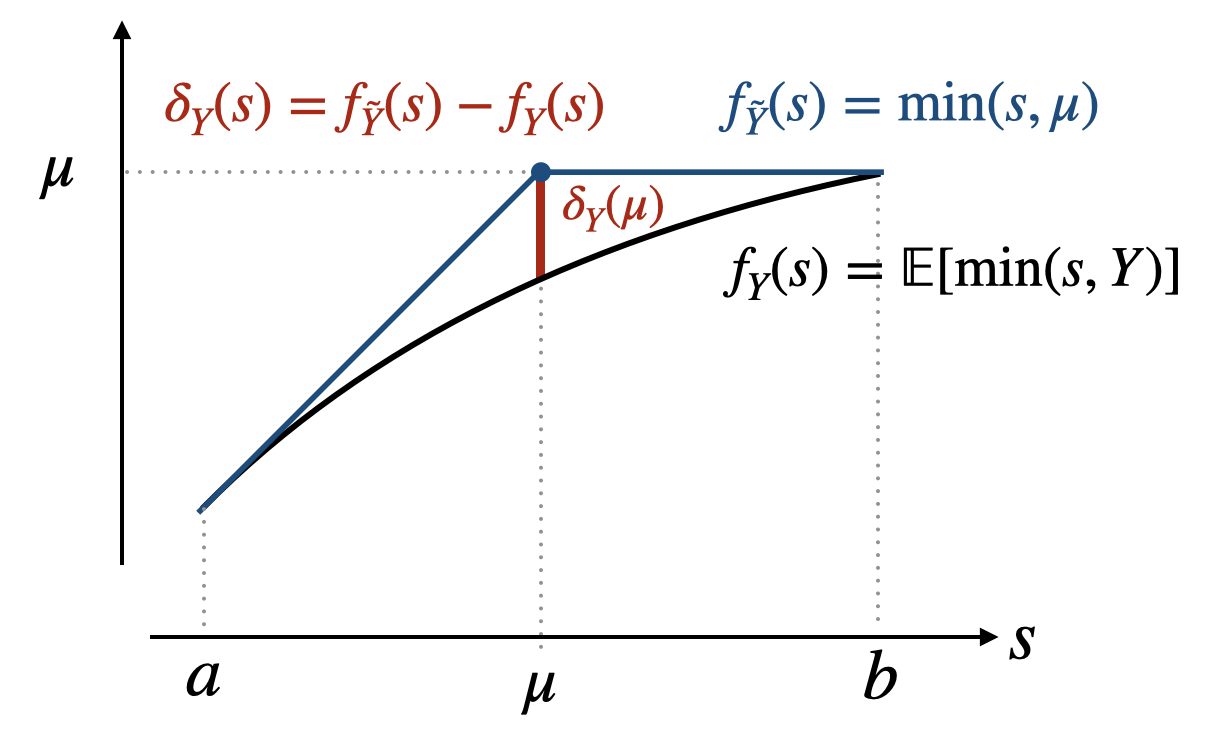} 
        \caption{$f_Y$ and $f_{\tilde{Y}}$} 
        \label{fig_2} 
    \end{figure}

    \begin{lemma}
        \label{lemma:delta_max}
        \begin{equation}
            \max_{s \in (a,b]} \delta_Y(s) = \delta_Y(\mu) = \E_{a,\mu}[\mu -Y]
        \end{equation}
    \end{lemma}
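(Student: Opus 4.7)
The plan is to remove the absolute value, localize the maximum of $\delta_Y$ to $s = \mu$ by a monotonicity argument on the two halves $(a,\mu]$ and $[\mu,b]$, and then compute the value at $\mu$ directly. First I would note that by Lemma \ref{lemma:concave} the function $f_Y$ is concave; equivalently, since $y \mapsto \min(s,y)$ is concave for each fixed $s$, Jensen's inequality gives $f_Y(s) = \E[\min(s,Y)] \leq \min(s,\E[Y]) = \min(s,\mu)$ for every $s$. Hence $\delta_Y(s) = \min(s,\mu) - f_Y(s) \geq 0$, and I may drop the absolute value.

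Next I would split $(a,b]$ at $\mu$. For $s \in (a,\mu]$, $\min(s,\mu) = s$, so
$\delta_Y(s) = s - \E[\min(s,Y)] = \E[s - \min(s,Y)] = \E[(s-Y)^+],$
which is non-decreasing in $s$ because $s \mapsto (s-y)^+$ is non-decreasing for every fixed $y$. For $s \in [\mu,b]$, $\min(s,\mu) = \mu$, so
$\delta_Y(s) = \mu - \E[\min(s,Y)],$
and $s \mapsto \min(s,y)$ is non-decreasing for every fixed $y$, so $\delta_Y$ is non-increasing on $[\mu,b]$. Combining the two monotonicities yields $\max_{s \in (a,b]} \delta_Y(s) = \delta_Y(\mu)$.

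Finally I would evaluate $\delta_Y(\mu) = \mu - \E[\min(\mu,Y)] = \E[(\mu - Y)^+]$. Because the support of $Y$ lies in $(a,b]$, the integrand $(\mu-Y)^+$ vanishes outside $(a,\mu]$, so this expectation equals the partial expectation $\E_{a,\mu}[\mu - Y]$ in the paper's notation, giving the claimed identity.

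I do not expect a serious obstacle here: the monotonicity argument uses only pointwise properties of $\min(\cdot,\cdot)$ and $(\cdot)^+$, so it applies uniformly to continuous and discrete $Y$ without needing derivative information, and the only point requiring a brief comment is the natural extension of the $\E_{a,\mu}[\cdot]$ notation from the random variable $Y$ itself to the affine expression $\mu - Y$.
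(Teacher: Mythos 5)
Your proof is correct and follows essentially the same route as the paper's: split $(a,b]$ at $\mu$, show $\delta_Y$ is nondecreasing on the left piece and nonincreasing on the right, and read off the value at $\mu$ as the partial expectation $\E_{a,\mu}[\mu-Y]$. The only differences are cosmetic --- you get monotonicity from the pointwise monotonicity of $(s-y)^+$ and $\min(s,y)$ and nonnegativity from Jensen, while the paper computes increments of partial expectations explicitly and invokes continuity at $\mu$, a step your closed-interval formulation on $[\mu,b]$ quietly sidesteps.
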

    \begin{proof}
        Assume $s \in (a, b]$. Then, $f_Y$ is written as
        $$
        f_Y(s) = \E_{a,s}[Y] + \E_{s,b}[s].
        $$
        Thus, if $s \in (a, \mu]$, from $\min(s, \mu)=s$, we see that
        \begin{align*}
            \delta_Y(s) &= s - (\E_{a,s}[Y] + \E_{s,b}[s])\\
            &=\E_{a,s}[s] + \E_{s,b}[s]  - \E_{a,s}[Y] - \E_{s,b}[s]\\
            &= \E_{a,s}[s-Y].
        \end{align*}
        Similarly, if $s \in (\mu, b]$, from $\min(s, \mu) = \mu$, we see that
        \begin{align*}
            \delta_Y(s) &= \mu - (\E_{a,s}[Y] + \E_{s,b}[s])\\
            &=\E_{a,s}[Y] + \E_{s,b}[Y]  - \E_{a,s}[Y] - \E_{s,b}[s]\\
            &= \E_{s,b}[Y-s].
        \end{align*}
        Hence, we obtain
        $$
        \delta_Y(s) =  \left \{\begin{array}{ll}
                                   \E_{a,s}[s-Y] & \text{if} \  s \in (a, \mu], \\
                                   \E_{s,b}[Y-s] & \text{if} \  s \in (\mu, b]. \\
        \end{array}
        \right .
        $$
        For $s \in (a, \mu)$ and $s +\epsilon \in  (a, \mu]$ such that $\epsilon >0$, we see that
        \begin{align*}
            \E_{a, s + \epsilon}[s + \epsilon - Y] &= \E_{a, s}[s + \epsilon - Y] + \E_{s, s+\epsilon}[s + \epsilon - Y]\\
            &= \E_{a, s}[s - Y] + \E_{a, s}[\epsilon] + \E_{s, s+\epsilon}[s + \epsilon - Y]\\
            &\geq \E_{a, s}[s - Y].
        \end{align*}
        Thus, $\delta_Y(s)$ is increasing for $s \in (a, \mu]$, and decreasing for $s \in (\mu, b]$ as well. Also, it is continuous since $f_{Y}$ and $f_{\tilde{Y}}$ are continuous from Lemma \ref{lemma:concave}. Therefore, it attains its maximum value at $s=\mu$.
    \end{proof}
    We finally obtain the upper bound as follows.
    \begin{theorem}
        \label{theorem:one-point}
        \begin{equation}
            \max_{s \in (a, b]} \delta_Y(s)  \leq \frac{b-a}{4}.
        \end{equation}
    \end{theorem}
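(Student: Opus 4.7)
\textbf{Proof plan for Theorem \ref{theorem:one-point}.}
By Lemma \ref{lemma:delta_max}, it suffices to bound $M := \E_{a,\mu}[\mu - Y]$ above by $\frac{b-a}{4}$, where $\mu = \E[Y]$. The plan is to derive two complementary upper bounds on $M$, one involving the interval $(a,\mu]$ and the other involving $(\mu, b]$, then combine them via AM-GM.

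First I would set $p := P(Y \in (a,\mu])$ and observe the trivial pointwise bound $\mu - y \leq \mu - a$ for $y \in (a, \mu]$, which integrates (or sums, in the discrete case) to
\begin{equation*}
M = \E_{a,\mu}[\mu - Y] \leq p \cdot (\mu - a).
\end{equation*}
Next I would use the defining identity $\E[Y - \mu] = 0$, which splits across $(a,\mu]$ and $(\mu, b]$ as $\E_{a,\mu}[Y - \mu] + \E_{\mu,b}[Y - \mu] = 0$, so
\begin{equation*}
M = \E_{a,\mu}[\mu - Y] = \E_{\mu,b}[Y - \mu] \leq (1-p)(b - \mu),
\end{equation*}
by the same pointwise argument applied on the upper piece. (Any atom at $\mu$ contributes $0$ to both sides, so the partition of mass between $\{Y \leq \mu\}$ and $\{Y > \mu\}$ causes no trouble.)

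Multiplying the two bounds and applying $p(1-p) \leq \tfrac{1}{4}$ and $(\mu-a)(b-\mu) \leq \tfrac{(b-a)^2}{4}$ (both standard AM-GM estimates, the second since $(\mu-a) + (b-\mu) = b-a$), one gets
\begin{equation*}
M^{2} \leq p(1-p)(\mu-a)(b-\mu) \leq \frac{1}{4} \cdot \frac{(b-a)^2}{4} = \frac{(b-a)^2}{16},
\end{equation*}
and taking square roots yields $M \leq \frac{b-a}{4}$, as required.

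I do not anticipate a serious obstacle: the argument is a short chain of elementary inequalities, and the only delicate point is handling the two cases (continuous and discrete) uniformly, which is automatic once $\E_{a,\mu}[\cdot]$ is interpreted via its definition in Section 2.3 and the identity $\E[Y-\mu]=0$ is written as $\E_{a,\mu}[Y-\mu] + \E_{\mu,b}[Y-\mu] = 0$. It is also worth noting that the bound is tight, attained in the limit by a two-point distribution placing mass $\tfrac{1}{2}$ near each endpoint of $(a, b]$, which confirms the constant $\tfrac{1}{4}$ cannot be improved in this generality.
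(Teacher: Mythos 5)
Your proof is correct, and after the shared reduction via Lemma \ref{lemma:delta_max} it takes a genuinely different route from the paper. The paper computes $\delta_Y(\mu)$ \emph{exactly} in terms of the conditional means $\mu_A=\E[Y\mid Y\le \mu]$ and $\mu_B=\E[Y\mid \mu<Y\le b]$ and their masses: solving $P_A+P_B=1$ and $P_A\mu_A+P_B\mu_B=\mu$ gives $\delta_Y(\mu)=\frac{(\mu-\mu_A)(\mu_B-\mu)}{\mu_B-\mu_A}$, which is then maximized as a quadratic in $\mu$ over $[\mu_A,\mu_B]$ to obtain $\frac{\mu_B-\mu_A}{4}\le\frac{b-a}{4}$. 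You instead keep only inequalities: with $M=\E_{a,\mu}[\mu-Y]$ and $p=P(Y\in(a,\mu])$, the pointwise bound $M\le p(\mu-a)$, the mean-balance identity $\E_{a,\mu}[\mu-Y]=\E_{\mu,b}[Y-\mu]$ giving $M\le(1-p)(b-\mu)$, and then the product of the two bounds combined with $p(1-p)\le\frac14$ and $(\mu-a)(b-\mu)\le\frac{(b-a)^2}{4}$. Your version is more elementary: it needs no conditional expectations and no case distinction (the paper must assume $P_B>0$ to divide by $\mu_B-\mu_A$, whereas if $1-p=0$ your second bound immediately yields $M=0$), and the handling of an atom at $\mu$ is indeed harmless as you note. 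What the paper's exact representation buys is a sharper intermediate estimate $\delta_Y(\mu)\le\frac{\mu_B-\mu_A}{4}$ and, more importantly, the closed-form expression $\frac{(\mu-\mu_A)(\mu_B-\mu)}{\mu_B-\mu_A}$ that is reused in Remark \ref{remark:uniform} to justify the $\frac{b-a}{8}$ approximation behind $B_{1/8}$; your argument discards that refinement, although $M\le\sqrt{p(1-p)(\mu-a)(b-\mu)}$ is a refinement of a different flavor. Your closing tightness observation (two nearly symmetric atoms near the endpoints) is consistent with the paper's constant and is a good sanity check.
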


    \begin{proof}
        Since $\max_{s \in (a, b]}\delta_Y(s) = \delta_Y(\mu)$ from Lemma \ref{lemma:delta_max}, we have
        \begin{align}
            \delta_Y(\mu) &= \E_{a, \mu}[\mu - Y] \notag \\
            &= P(a < Y \leq \mu) \E[\mu - Y | a < Y \leq \mu] \notag \\
            &= P_A (\mu -\mu_A) \label{eq:last},
        \end{align}
        where $P_A = P(Y \leq \mu)$, $\mu_A = \E[Y \mid Y \leq \mu]$.
        Define $P_B = P( \mu <  Y \leq  b)$ and $\mu_B = \E [Y \mid \mu < Y \leq b]$, where we assume that $P_B > 0$ since $\delta_Y(\mu) = 0$ holds when $P_B = 0$. Then, we have the following relation:
        \begin{align*}
            P_A + P_B &= 1, \\
            P_A \mu_A + P_B \mu_B  &= \mu.
        \end{align*}
        Thus, we have $P_A = \frac{\mu_B - \mu}{\mu_B - \mu_A}$. Therefore, substituting $P_A$ into \eqref{eq:last}, we obtain the following equation:

        $$
        \delta_Y(\mu) = \frac{(\mu - \mu_A)(\mu_B - \mu)}{\mu_B - \mu_A}.
        $$
        Moreover, we obtain
        $$
        \frac{(\mu - \mu_A)(\mu_B - \mu)}{\mu_B - \mu_A} \leq \max_{\mu' \in [\mu_A, \mu_B]} \frac{(\mu' - \mu_A)(\mu_B - \mu')}{\mu_B - \mu_A} = \frac{\mu_B-\mu_A}{4},
        $$
        where the last equation is from that the maximum value is attained when $\mu' = \frac{\mu_B - \mu_A}{2}$.
        Since $\mu_B - \mu_A \leq b-a$ holds, we finally get $\delta_Y(\mu) \leq \frac{b-a}{4}$.
    \end{proof}
    Under certain assumptions, we can approximate $\delta_Y(\mu) \approx \frac{b-a}{8}$, which is useful in the design of algorithms in the next section.
    \begin{remark}
        \label{remark:uniform}

        Let $Z$ be a random variable having the probability density function $f: [a, b] \rightarrow \R$ and assume $f$ is absolutely continuous on $[a,b]$ and its first derivative $f'$ belongs to the Lebesgue space $L_\infty [a,b]$. From Theorem 2 in \cite{barnett2000some}, we have
        \begin{align*}
            \left|\E[Z] - \frac{a + b}{2}\right|  \leq \frac{(b-a)^3}{12} \| f'\|_{\infty}.
        \end{align*}
        Thus, when the first derivative of the probability function in $(a, b]$ is small enough, in the proof above, we can approximate the error as
        $$
        \delta_Y(\mu) = \frac{(\mu - \mu_A)(\mu_B - \mu)}{\mu_B - \mu_A} \approx \frac{b-a}{8},
        $$
        where
        $$
        \mu \approx \frac{a+b}{2}, \ \mu_A \approx \frac{a + \mu}{2} \ \text{and}\ \mu_B \approx \frac{\mu+b}{2}.
        $$
        Under this assumption, we can also approximate \( \Delta_{X}(I_j) \) as follows:
        \begin{equation}
            \label{eq:approx}
            \Delta_X(I_j) \approx P(X \in I_j) \cdot \frac{b_j - a_j}{8}.
        \end{equation}
    \end{remark}

    \newpage

    \section{Partition Algorithms}

    In this section, we propose a partition algorithm that guarantees a bounded number of breakpoints while keeping the error below \( \epsilon \). Our algorithm is shown in Algorithm \ref{partition-algorithm} and based on the results of Section 3.

    \begin{algorithm}[H]
        \caption{Partition Algorithm}
        \label{partition-algorithm}
        \begin{algorithmic}[1]
            \REQUIRE  \ \\
            - error bound function, $B \in \{B_{\text{exact}}, B_{1/4}, B_{1/8} \}$ defined by \eqref{def_B} \\
            - support of the random variable, S $\subseteq \R$ \\
            - target interval $(a, b] \subseteq \mathbb{R}$\\
            - acceptable error $\epsilon > 0$\\
            \ENSURE a partition of $\R$, $I=(I_0, I_1, \dots, I_n, I_{n+1})$, such that $(I_1, \dots, I_n)$ is also a partition of $(a,b]$
            \STATE $a_1 \leftarrow a$
            \STATE $j \leftarrow 1$
            \WHILE{$B(a_j, b)> \epsilon$}
            \STATE $\displaystyle b_j \leftarrow \max_{y \in (a_j, b) \cap S} y \ \text{s.t.} \ B(a_j, y) \leq \epsilon$ \label{find_b}
            \STATE $a_{j+1} \leftarrow b_j$
            \STATE $I_j \leftarrow (a_j, b_j]$
            \STATE $j \leftarrow j+1$
            \ENDWHILE
            \STATE $b_j \leftarrow b$
            \STATE $I_j \leftarrow (a_j, b_j]$
            \STATE $n \leftarrow j$
            \STATE $I_0 \leftarrow (-\infty, a]$
            \STATE $I_{n+1} \leftarrow (b, -\infty)$
            \RETURN $\mathcal{I}=(I_0, I_1, \dots, I_n, I_{n+1})$
        \end{algorithmic}
    \end{algorithm}

    The input to the Partition Algorithm consists of a tolerance \(\epsilon > 0\), an interval \((a, b]\), and a bound function \( B(x, y) \) that roughly represents the error in the piecewise linear approximation over the interval \( (x, y] \). $B$ is chosen in \( \{B_{\text{exact}}, B_{1/4}, B_{1/8}\} \) defined as
    \begin{align}
        \begin{split}
            B_{\text{exact}}(x, y) & \coloneqq \E_{x, \mu}[\mu - X] \  \ (\mu = \E[X \mid X \in (x, y]])   \\
            B_{1/4}(x, y) &\coloneqq   P(X \in (x, y]) \cdot \frac{y-x}{4}\\
            B_{1/8}(x, y) &\coloneqq  P(X \in (x, y]) \cdot \frac{y-x}{8},
        \end{split}
        \label{def_B}
    \end{align}
    where $B_{\text{exact}}$ and $ B_{1/4}$ are derived from the equality and inequality of Theorem \ref{theorem:delta}, respectively and $B_{1/8}$ is derived from Remark \ref{remark:uniform}.
    For simplicity, we refer to the Partition algorithm using the bound function \( B \) as Algorithm \( B \). The output is a partition of $\R$.

    The idea behind the algorithm is quite simple. We start with \(a_1 = a\), and in the \(j\)-th iteration, with \(a_j\) already determined, we choose \(b_j\) to be the largest value such that an error function \(B(a_j, b_j)\) does not exceed \(\epsilon\). In this study, we assume that we can find \( b_j \) exactly. The validity of this assumption and an actual method for finding \( b_j \) will be discussed later. Our algorithms bear a resemblance to the heuristic algorithms presented by \cite{rebennack2015continuous}, in which the next maximum breakpoint is selected to ensure that the error does not exceed $\epsilon$. The key distinction is that, in our algorithms, we determine the breakpoints indirectly by defining the intervals.

    From Theorem \ref{theorem:delta}, we see the following relation
    $$
    \Delta_X((x,y]) = B_{\text{exact}}(x, y) \leq 2B_{1/8}(x, y) = B_{1/4}(x, y).
    $$
    Therefore, we have the following result about approximation errors from Colorollary \ref{theorem:approx}.
    \begin{theorem}

        Let $\tilde{X}$ be the discrete random variable induced by piecewise linear approximation with the output $\mathcal{I}$ of Algorithm \ref{partition-algorithm}.
        Depending on the setting of \( B \), the following results are obtained:
        \begin{alignat*}{2}
            e_{X, \tilde{X}} &\leq \epsilon  &\quad \text{ if } \quad & B = B_{\text{{exact}}} \text{ or } B=B_{1/4}, \\
            e_{X, \tilde{X}} &\leq 2\epsilon &\quad \text{ if } \quad & B=B_{1/8}.
        \end{alignat*}
    \end{theorem}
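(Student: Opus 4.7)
The plan is to reduce the theorem to a per-interval error analysis and then to exploit the stopping invariant of Algorithm \ref{partition-algorithm}. By Corollary \ref{theorem:approx}, we have $e_{X, \tilde{X}} = \max_{j \in \{1, \dots, n\}} \Delta_X(I_j)$, so it suffices to give a uniform upper bound on $\Delta_X(I_j)$ valid for every non-tail index $j$.

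Next, I would establish the invariant $B(a_j, b_j) \leq \epsilon$ for every $j \in \{1, \dots, n\}$. For $j < n$, the intermediate iteration sets $b_j$ in line 4 as the largest $y \in (a_j, b) \cap S$ satisfying $B(a_j, y) \leq \epsilon$, so the inequality holds by construction (the paper's standing assumption that such a maximizer can be computed exactly takes care of the selection). For the terminal index $j = n$, the while loop was exited precisely because its test $B(a_n, b) > \epsilon$ failed, so $B(a_n, b) \leq \epsilon$; and since the algorithm then sets $b_n = b$, the invariant still holds for the final interval $I_n = (a_n, b]$.

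Finally, I would combine this invariant with the chain of (in)equalities $\Delta_X(I_j) = B_{\text{exact}}(a_j, b_j) \leq 2 B_{1/8}(a_j, b_j) = B_{1/4}(a_j, b_j)$ noted directly before the theorem. If $B = B_{\text{exact}}$, then $\Delta_X(I_j) = B(a_j, b_j) \leq \epsilon$; if $B = B_{1/4}$, then $\Delta_X(I_j) \leq B(a_j, b_j) \leq \epsilon$; and if $B = B_{1/8}$, then $\Delta_X(I_j) \leq B_{1/4}(a_j, b_j) = 2 B(a_j, b_j) \leq 2\epsilon$. Taking the maximum over $j$ yields the three claimed bounds on $e_{X, \tilde{X}}$.

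I do not expect any substantive obstacle, since the argument is essentially a repackaging of Corollary \ref{theorem:approx}, Theorem \ref{theorem:delta}, and the construction in Algorithm \ref{partition-algorithm}. The only care-requiring point is the boundary case of the final interval $I_n$, which bypasses line 4 and so must be handled separately via the exit condition of the while loop; a secondary subtlety, the well-definedness of the maximum in line 4, is explicitly deferred by the paper's assumption that $b_j$ can be found exactly and therefore does not enter the proof.
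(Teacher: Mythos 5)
Your proposal is correct and follows essentially the same route the paper takes: the paper states this theorem without a detailed proof, relying exactly on the chain $\Delta_X((x,y]) = B_{\text{exact}}(x,y) \leq 2B_{1/8}(x,y) = B_{1/4}(x,y)$ together with Corollary \ref{theorem:approx} and the fact that the algorithm keeps $B(a_j,b_j) \leq \epsilon$ on every interval of $(a,b]$. Your explicit treatment of the final interval via the failed while-loop test $B(a_n,b) \leq \epsilon$ is precisely the detail the paper leaves implicit, so nothing further is needed.
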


    The remaining issue is how large the upper bound of the number of breakpoints for each algorithm is.
    To summarize, the results obtained in this study can be compiled in the following table (Table \ref{tab:breakpoints}).
    \begin{table}[htb]
        \centering
        \begin{tabular}{cccc}
            \toprule
            Algorithm & error & \multicolumn{2}{c}{breakpoints} \\
            &                 & continuous                                           & discrete                                             \\
            \midrule
            \( B_{\text{exact}} \) and  \( B_{1/4} \) & \( \epsilon \)  & \( \frac{1}{2} \sqrt{\frac{b-a}{\epsilon}}\)         & $\sqrt{\frac{b-a}{\epsilon}}$                         \\
            \( B_{1/8} \)                           & \( 2\epsilon \) & \( \frac{1}{2\sqrt{2}}\sqrt{\frac{b-a}{\epsilon}} \) & \( \frac{1}{\sqrt{2}} \sqrt{\frac{b-a}{\epsilon}} \)  \\
            \bottomrule
        \end{tabular}
        \caption{Summary of Properties for Each Algorithm}
        \label{tab:breakpoints}
    \end{table}
    As can be seen from Table \ref{tab:breakpoints}, the bounds on the number of breakpoints differ depending on whether the target random variable is continuous or discrete. Furthermore, \( B_{\text{exact}} \) and \( B_{1/4} \) yield equivalent results in terms of theoretical guarantees.

    Apart from the above differences, each algorithm has its own unique characteristics. We can show that Algorithm \( B_{\text{exact}} \) is guaranteed to have the minimum number of breakpoints. However, for continuous random variables, the calculation of \( B_{\text{exact}} \) involves numerical integration for conditional expectations, making its implementation costly and potentially introducing numerical errors. Algorithm \( B_{1/8} \) theoretically could have an error up to \( 2\epsilon \), but in practice, it behaves almost identically to \( B_{\text{exact}} \) (see the section on numerical experiments for details).

    Finally, we outline the remaining structure of this section. In Section 4.1, we discuss the specific implementation methods for finding \( b_j \). In Section 4.2, we prove the optimality of Algorithm \( B_{\text{exact}} \). Lastly, in Section 4.3, we provide bounds on the number of breakpoints for each algorithm.

    \subsection{Implementation of Finding Next Point}
    We discuss the actual implementation methods for finding \( b_j \) even though we assume that \( b_j \) is determined exactly in line \ref{find_b} of Algorithm 1,

    Unless otherwise specified, \( B \) can be any of \eqref{def_B}, where we assume $B(x, x) = 0$ for any $x$.
    Here, we describe the implementation method for finding \( b_j \) as performed in line \ref{find_b} of the algorithm. First, for \( x \in (a, b) \), we define the following function \( L_x \colon (x, b] \rightarrow \mathbb{R} \):
    \begin{equation}
        L_x(y) = B(x, y) - \epsilon.
    \end{equation}
    At the time of the \( j \)-th iteration in line \ref{find_b}, \( L_{a_j}(a_j) = -\epsilon < 0 \) and \( L_{a_j}(b) > 0 \) hold.

    We first consider the case where \( X \) is continuous. For simplicity, we assume that the support of \( X \) is \( \mathbb{R} \). We assume that \( L_{a_j} \) is continuous and strictly monotonically increasing. This is an assumption that holds for standard distributions. At this time, there exists only one \( y \) such that \( L_{a_j}(y) = 0 \), that is, \( B(a_j, y) = \epsilon \), within \( (a_j, b) \). This can be realized by calling \( B \) \( O(\log (\frac{b-a}{d})) \) times using binary search for a small allowable error \( d > 0 \).

    Next, we consider the case where \( X \) is discrete. Let \( Z = \{x_1, \dots, x_K\} \) where \( x_1 < \dots < x_K \) be the intersection of the support \( S \) of \( X \) and \( (a, b] \). We assume that $a_j = x_{k[j]} \in Z$. We can find the largest $y$ from $\{x_{k[j]+1}, \dots, x_{K-1}\}$ satisfying \( L_{a_j}(y) \leq \epsilon \) by \( O(\log(b-a)) \) calls to \( B \). Finally, we note that special handling is needed for $B_{1/4}$ and $B_{1/8}$ when $\epsilon$ is extremely small.
    In such cases, it is possible that no $y$ satisfying $B(a_j, b_j) \leq \epsilon$ exists within $\{x_{k[j]+1}, \dots, x_{K-1}\}$.
    In this situation, we set $b_j = x_{k[j] + 1}$.
    Although this would make $B(a_j, b_j) > \epsilon$, the actual error on $(a_j, b_j]$ can still be bounded below $\epsilon$ as the exact error $B_{\text{exact}}(a_j,b_j)=0$.

    \subsection{Optimality}

    In this section, we show the optimality of Algorithm \( B_{\text{exact}} \). Specifically, we show that the output of this algorithm is an optimal solution to the following optimization problem as follows: for given $\epsilon > 0$ and $(a, b] \subseteq  \R$:
    \begin{align}
        \label{opt_breakpoint}
        \begin{array}{lll}
            \text{minimize}   & n                                                                \\
            \text{subject to} & \Delta_X(I_j) \leq \epsilon \quad \forall i \in \{1, \dots, n\}, \\
        \end{array}
    \end{align}
    where $n \in \mathbb{N}$ and a partition $\mathcal{I}=(I_0, \dots, I_{n+1})$ are decision variables.
    \begin{theorem}
        \label{theorem:opt}
        Let $\mathcal{I} = (I_0,I_1, \dots, I_n, I_{n+1})$ be the output of Algorithm $B_{\text{exact}}$. Then, $\mathcal{I}$ is an optimal solution of \eqref{opt_breakpoint}.
    \end{theorem}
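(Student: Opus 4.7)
The plan is to prove the optimality of Algorithm $B_{\text{exact}}$ via an exchange-style induction that rests on a monotonicity property of $B_{\text{exact}}$ with respect to interval inclusion. Let $\mathcal{I}^{*} = (I_{1}^{*}, \dots, I_{n^{*}}^{*})$ with $I_{j}^{*} = (a_{j}^{*}, b_{j}^{*}]$, $a_{1}^{*} = a$, and $b_{n^{*}}^{*} = b$ be an optimal solution of \eqref{opt_breakpoint}, and let $\mathcal{I} = (I_{1}, \dots, I_{n})$ be the output of Algorithm $B_{\text{exact}}$. Feasibility of $\mathcal{I}$ is already given by the preceding error theorem, so it suffices to show $n \leq n^{*}$.

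The core technical step is a monotonicity lemma: if $(x_{1}, y_{1}] \subseteq (x_{2}, y_{2}]$ then $B_{\text{exact}}(x_{1}, y_{1}) \leq B_{\text{exact}}(x_{2}, y_{2})$. I would prove it using the two equivalent representations
\begin{equation*}
    B_{\text{exact}}(x, y) = \E_{x, \mu}[\mu - X] = \E_{\mu, y}[X - \mu], \qquad \mu = \E[X \mid X \in (x, y]],
\end{equation*}
where equality of the two expressions follows from the mean-balance identity $\E_{x, y}[X] = \mu \cdot P(X \in (x, y])$ already used in the proof of Theorem~\ref{theorem:delta}. Fixing $x$ and increasing $y$ to $y'$ (which weakly raises $\mu$ to $\mu'$), a direct expansion of the first form gives
\begin{equation*}
    B_{\text{exact}}(x, y') - B_{\text{exact}}(x, y) = (\mu' - \mu)\,P(X \in (x, \mu]) + \E_{\mu, \mu'}[\mu' - X] \geq 0,
\end{equation*}
so $B_{\text{exact}}$ is non-decreasing in $y$. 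A symmetric expansion of the second form shows it is non-increasing in $x$; chaining the two monotonicities yields the lemma.

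With the lemma, I proceed by contradiction: suppose $n \geq n^{*} + 1$, so iterations $1, \dots, n^{*}$ are all loop iterations of Algorithm~\ref{partition-algorithm}. I claim $a_{j} \geq a_{j}^{*}$ for every $j \in \{1, \dots, n^{*}\}$, by induction. The base case $a_{1} = a_{1}^{*} = a$ is trivial. For the step, suppose $a_{j} \geq a_{j}^{*}$ with $j < n^{*}$; if $a_{j} \geq b_{j}^{*}$ already, then $a_{j+1} = b_{j} > a_{j} \geq b_{j}^{*} = a_{j+1}^{*}$ trivially, while otherwise $b_{j}^{*} \in (a_{j}, b) \cap S$ (using $b_{j}^{*} < b$ since $j < n^{*}$, and $b_{j}^{*} \in S$ since in the discrete case the optimal's breakpoints can be taken inside $S$ without changing $\tilde{X}$), and by monotonicity $B_{\text{exact}}(a_{j}, b_{j}^{*}) \leq B_{\text{exact}}(a_{j}^{*}, b_{j}^{*}) \leq \epsilon$, so the greedy maximum satisfies $b_{j} \geq b_{j}^{*}$. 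Either way, $a_{j+1} \geq a_{j+1}^{*}$.

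Applying the induction up to $j = n^{*}$ and invoking monotonicity once more,
\begin{equation*}
    B_{\text{exact}}(a_{n^{*}}, b) \leq B_{\text{exact}}(a_{n^{*}}^{*}, b) = B_{\text{exact}}(a_{n^{*}}^{*}, b_{n^{*}}^{*}) \leq \epsilon,
\end{equation*}
which contradicts the while-loop condition $B(a_{n^{*}}, b) > \epsilon$ that must hold at iteration $n^{*}$ under the hypothesis $n \geq n^{*} + 1$. Hence $n \leq n^{*}$, and combined with feasibility we get $n = n^{*}$. The main obstacle is proving the monotonicity lemma cleanly: neither the formula $\E_{x, \mu}[\mu - X]$ nor $\E_{\mu, y}[X - \mu]$ alone gives both directions of monotonicity directly, because perturbing either endpoint simultaneously shifts $\mu$; using both forms in tandem — each making one endpoint's direction transparent — is what makes the argument go through.
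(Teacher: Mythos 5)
Your proposal is correct and follows essentially the same route as the paper: a greedy ``stays-ahead'' induction showing the algorithm's endpoints dominate those of any feasible partition ($a_j \geq a_j^*$, equivalently $b_j \geq b_j^*$), hinging on the same monotonicity of the exact error under interval inclusion, which the paper isolates as Lemma~\ref{lemma:lower_delta} in the appendix. The only difference is cosmetic: you rederive that monotonicity via the two representations $\E_{x,\mu}[\mu - X] = \E_{\mu,y}[X - \mu]$ and phrase the final contradiction through the while-loop condition rather than through $b = b_m^* \leq b_m < b_n = b$, but the substance is identical.
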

    \begin{proof}
        Let $\mathcal{I} = (I_0,I_1, \dots, I_n, I_{n+1})$ be the output of Algorithm \ref{partition-algorithm}, where we use the notations $I_j = (a_j, b_j]$ for $j \in \{1, \dots, n\}$. We derive a contradiction by assuming that there is a feasible solution $\mathcal{I}^* = (I^*_0, \dots, I_{m+1}^*)$ such that $m < n$, where we also use the notation $I_j^* = (a_j^*, b_j^*)$. Without loss of generality, we assume that $a_j^*, b_j^*$ are chosen in the support $S$ of $X$.

        We prove for any $j \in \{1, \dots, m\}$, $b_j^* \leq b_j$ by induction. When $j=1$, we see that $a_1=a_1^* = a$, $\Delta(a, b_1) \leq \epsilon$ and $\Delta(a, b_1^*) \leq \epsilon$. Since $b_1$ is the maximum in $S \cap (a, b]$ such that $\Delta(a, b_1) \leq \epsilon$, we have $b_1^* \leq b_1 $. Now, for $j \in \{2, \dots, m\}$, we assume that $b_{j-1} \leq b_{j-1}^*$. First, we consider the case when $b_j^* \leq a_j $. In this case $b_j^* \leq a_j < b_j$ holds. Second, we assume that $a_j \leq b_j^*$. From the assumption of induction, we have
        $$
        a_j^* = b_{j-1}^*\leq b_{j-1} = a_j.
        $$
        Therefore, $a_j^* \leq a_j \leq b_j^*$ holds. From this relation, then it leads
        $$
        \Delta_X(a_j, b_j^*)    \leq \Delta_X(a_j^*, b_j^*) \leq \epsilon.
        $$
        where we use Lemma \ref{lemma:lower_delta}. When $b_j^* > b_j$, it contradicts that $b_j$ is maximal in $(a_j, b] \cap S$ such that $\Delta_X(a_j,b_j) \leq \epsilon$. Thus, we have $b_j^* \leq b_j$ for all $j \in \{1, \dots, m\}$.

        When $j=m$ in the result above, we have the following contradiction
        $$
        b = b_m^* \leq b_m < b_n = b.
        $$
        Hence, for any feasible solution $\mathcal{I}^* = (I^*_0, \dots, I_{m+1}^*)$, $m \geq n$ holds, which implies $\mathcal{I}$ is optimal.
    \end{proof}

    \subsection{Upper Bounds of Breakpoints}
    In this section, we provide bounds on the number of output intervals for each algorithm. Note that we give proof of the bounds only for  \( B_{1/4} \) and \( B_{1/8} \) since the number of the breakpoints with Algorithm $B_{\text{exact}}$  is less than or equal to that of Algorithm $B_{1/4}$ from Theorem \ref{theorem:opt}.

    First,  we derive bounds for the case when $X$ is continuous.
    \begin{theorem}
        Assume that $X$ is continuous. Let $\mathcal{I}=(I_0, I_1, \dots I_n, I_{n+1})$ be the outputs of Algorithm \ref{partition-algorithm}. Let $\tilde{X}$ be the discrete random variable induced by piecewise linear approximation with $\mathcal{I}$. Depending on the setting of \( B \), the following inequalities holds:
        \begin{alignat*}{2}
            n &\leq \frac{1+P}{4} \sqrt{\frac{b - a}{\epsilon}} + 1  \leq \frac{1}{2} \sqrt{\frac{b - a}{\epsilon}} + 1&\quad \text{ if } \quad & B \in \{B_{\text{{exact}}}, B_{1/4}\}, \\
            n &\leq \frac{1+P}{4\sqrt{2}} \sqrt{\frac{b - a}{\epsilon }} + 1 \leq \frac{1}{2\sqrt{2}} \sqrt{\frac{b - a}{\epsilon }} + 1&\quad \text{ if } \quad & B=B_{1/8},
        \end{alignat*}
        where $P = P(X \in (a, b])$.
    \end{theorem}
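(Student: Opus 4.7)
The strategy is to exploit the equality $B(a_j, b_j) = \epsilon$ that the greedy algorithm enforces at every iteration except possibly the last, and then turn the resulting product identities $p_j w_j = \mathrm{const}\cdot\epsilon$ into the stated bound via a single AM--GM inequality.

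First I would reduce to $B \in \{B_{1/4}, B_{1/8}\}$: by Theorem \ref{theorem:opt}, the number of intervals produced by Algorithm $B_{\text{exact}}$ is at most that of Algorithm $B_{1/4}$, so its bound is implied by the $B_{1/4}$ bound. Next, for $j = 1, \dots, n-1$ I write $p_j = P(X \in I_j)$ and $w_j = b_j - a_j$. Because $X$ is continuous, the map $y \mapsto B(a_j, y)$ is continuous and strictly increasing on $(a_j, b]$, while the while-loop guarantees $B(a_j, b) > \epsilon$ at the start of iteration $j \leq n-1$. Hence the maximum in line \ref{find_b} is attained and satisfies $B(a_j, b_j) = \epsilon$ exactly, giving $p_j w_j = 4\epsilon$ when $B = B_{1/4}$ and $p_j w_j = 8\epsilon$ when $B = B_{1/8}$.

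The core step, for $B = B_{1/4}$, is the AM--GM inequality in the rescaled form
\[
\sqrt{p_j \cdot \frac{w_j}{b-a}} \;\leq\; \frac{1}{2}\left(p_j + \frac{w_j}{b-a}\right),
\]
whose left-hand side evaluates to $\sqrt{4\epsilon/(b-a)} = 2\sqrt{\epsilon/(b-a)}$, independent of $j$. Summing over $j = 1, \dots, n-1$ and using the disjointness of the intervals (so that $\sum p_j \leq P$ and $\sum w_j \leq b - a$) would yield
\[
2(n-1)\sqrt{\frac{\epsilon}{b-a}} \;\leq\; \frac{1+P}{2},
\]
which rearranges to $n - 1 \leq \frac{1+P}{4}\sqrt{(b-a)/\epsilon}$; the second stated inequality then follows from $P \leq 1$.

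The case $B = B_{1/8}$ would proceed identically, except that $p_j w_j = 8\epsilon$ makes the constant left-hand side of the AM--GM inequality equal to $2\sqrt{2}\sqrt{\epsilon/(b-a)}$, producing the extra $1/\sqrt{2}$ factor after the same summation. I do not anticipate a genuine obstacle; the only step deserving care is the equality $B(a_j, b_j) = \epsilon$, which hinges on continuity of the distribution of $X$ and would degrade to a strict inequality in the discrete case (hence the separate treatment for that setting). An incidental remark is that sharper constants are available (e.g., choosing the AM--GM weight optimally gives $\sqrt{P}/2$ in place of $(1+P)/4$), but the symmetric form above yields the cleanest bound and is what is stated.
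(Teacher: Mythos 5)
Your proposal is correct and follows essentially the same route as the paper: reduce the $B_{\text{exact}}$ case to $B_{1/4}$ via the optimality theorem, use the exact equality $B(a_j,b_j)=\epsilon$ for $j\le n-1$ (valid in the continuous case), and apply AM--GM to $p_j$ and $(b_j-a_j)/(b-a)$ before summing against $\sum p_j\le P$ and $\sum (b_j-a_j)\le b-a$. The only difference is cosmetic (the paper handles both bound functions at once via a parameter $M\in\{1,2\}$), and your side remark about the sharper weighted AM--GM constant is a valid but inessential refinement.
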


    \begin{proof}
        Let $\mathcal{I} = (I_0,I_1, \dots, I_n, I_{n+1})$ be the output of Algorithm \ref{partition-algorithm}, where we use the notations $I_j = (a_j, b_j]$ for $j \in \{1, \dots, n\}$. Let $B \in \{ B_{1/4}, B_{1/8}\}$ and
        $$
        M =
        \begin{cases}
            1 & \text{if} \quad B = B_{1/4}, \\
            2 & \text{if} \quad B=B_{1/8}.
        \end{cases}
        $$
        For $j \in \{1, \dots, n\}$, define $p_j = P(X \in I_j)$ and $r_j = \frac{b_j -a_j}{b-a}$. From $\sum_{j=1}^{n-1} p_j \leq P$ and $\sum_{j=1}^{n-1}r_j \leq 1$, we have
        $$
        \sum_{j=1}^{n-1} (p_j + r_j) \leq 1+P.
        $$
        Now, we will get an upper bound of $n$ by estimating a lower bound of $p_j + r_j$.
        For any $j \in \{1, \dots, n-1\}$, since $B(a_j, b_j) = \epsilon$ holds in Line \ref{find_b} in Partition Algorithm, we have
        $$
        \epsilon = P(X \in I_j) \frac{b_j-a_j}{4M} = \frac{p_jr_j(b-a)}{4M}.
        $$
        Thus, we have
        $$
        p_jr_j =\frac{4M\epsilon}{b-a}.
        $$
        From the relationship of the geometric mean, we obtain
        $$
        p_j + r_j \geq 2\sqrt{p_jr_j} = 4 \sqrt{\frac{M\epsilon}{b-a} }.
        $$
        By summing \( j \) from \( 1 \) to \( n-1 \), we obtain the following expression:

        $$
        4(n-1) \sqrt{\frac{M\epsilon}{b -a}} \leq \sum_{j=1}^{n-1}(p_j + r_j) \leq 1+P
        $$
        Therefore,
        $$
        n \leq   \frac{1+P}{4\sqrt{M}}\sqrt{\frac{b - a}{\epsilon}} + 1.
        $$
    \end{proof}

    Next, we derive bounds for the case when $X$ is discrete.
    \begin{theorem}\label{theorem_discrete}
        Assume that $X$ is discrete. Let $\mathcal{I}=(I_0, I_1, \dots I_n, I_{n+1})$ be the outputs of Algorithm \ref{partition-algorithm}. Let $\tilde{X}$ be the discrete random variable induced by piecewise linear approximation with $\mathcal{I}$. Depending on the setting of \( B \), the following inequalities holds:
        \begin{alignat*}{2}
            n &\leq \frac{1+P}{2} \sqrt{\frac{b - a}{\epsilon}} + 1  \leq  \sqrt{\frac{b - a}{\epsilon}} + 1&\quad \text{ if } \quad & B = B_{\text{{exact}}} \text{ or } B=B_{1/4}, \\
            n &\leq \frac{1+P}{2\sqrt{2}} \sqrt{\frac{b - a}{\epsilon }} + 1 \leq \frac{1}{\sqrt{2}} \sqrt{\frac{b - a}{\epsilon }} + 1&\quad \text{ if } \quad & B=B_{1/8},
        \end{alignat*}
        where $P = P(X \in (a, b])$.
    \end{theorem}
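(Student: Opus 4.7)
The plan is to mimic the continuous-case proof, but to replace the sharp equality $B(a_j, b_j) = \epsilon$ (which holds in the continuous setting and fails in the discrete one) with a ``shifted'' strict inequality $B(a_j, b_{j+1}) > \epsilon$ that couples two consecutive intervals. The reason the discrete bound ends up exactly twice the continuous one is that this shifted inequality involves the summed probability $p_j + p_{j+1}$ and summed relative length $r_j + r_{j+1}$; after telescoping, each term is counted twice, doubling the right-hand side of the aggregate estimate.

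Concretely, I would set $p_j = P(X \in I_j)$, $r_j = (b_j - a_j)/(b - a)$, and let $M = 1$ for $B = B_{1/4}$ and $M = 2$ for $B = B_{1/8}$, so that
\[
B(a_j, b_{j+1}) = \frac{(p_j + p_{j+1})(r_j + r_{j+1})(b - a)}{4M}.
\]
The key structural step is to show that for every $j \in \{1, \dots, n-1\}$,
\[
B(a_j, b_{j+1}) > \epsilon.
\]
For $j \leq n - 2$ this is a direct consequence of the greedy maximality of $b_j$ inside $\{y \in (a_j, b) \cap S : B(a_j, y) \leq \epsilon\}$, applied to $b_{j+1}$, which lies in $(b_j, b) \cap S$. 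For $j = n - 1$ one instead uses the fact that the \textbf{while}-loop was still active at iteration $n - 1$, which gives $B(a_{n-1}, b) > \epsilon$, combined with $b_n = b$.

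Combined with AM-GM, this yields $(p_j + p_{j+1}) + (r_j + r_{j+1}) > 4\sqrt{M\epsilon/(b - a)}$ for each such $j$. Summing over $j = 1, \dots, n - 1$, each $p_j$ and each $r_j$ appears at most twice, so the left-hand side is bounded by $2\sum_{j=1}^{n} p_j + 2\sum_{j=1}^{n} r_j = 2P + 2 \leq 2(1 + P)$. Rearranging gives $n - 1 \leq \frac{1 + P}{2\sqrt{M}}\sqrt{(b - a)/\epsilon}$, which specializes to the two displayed bounds after substituting $M = 1$ or $M = 2$; the weaker second inequalities follow from $P \leq 1$. The case $B = B_{\text{exact}}$ is then automatic from Theorem~\ref{theorem:opt}, since Algorithm $B_{\text{exact}}$ produces no more breakpoints than Algorithm $B_{1/4}$.

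The step I expect to require the most care is the boundary case $j = n - 1$ of the key inequality: since $b_n = b$ need not belong to the support $S$, the greedy-maximality argument used for $j \leq n - 2$ does not apply there, and one has to fall back on the \textbf{while}-loop termination condition instead. A smaller bookkeeping point is tracking the factor $1/(4M)$ inside $B$ consistently when converting the shifted inequality into the product form fed to AM-GM, and keeping the strict-to-non-strict passage clean when concluding an integer upper bound on $n$ from a strict inequality on $n - 1$.
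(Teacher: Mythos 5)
Your proposal is correct and follows essentially the same route as the paper: a strict inequality forced by the greedy maximality of $b_j$, then AM–GM on a product of a probability term and a relative-length term, then a summation that double-counts and yields the extra factor $2$ relative to the continuous case, with $B_{\text{exact}}$ handled via Theorem~\ref{theorem:opt}. The only difference is cosmetic bookkeeping: you couple whole consecutive intervals through $B(a_j, b_{j+1}) > \epsilon$ (using the \textbf{while}-loop condition for $j = n-1$), whereas the paper pads each $I_j$ by only the single next support point $x_{k[j]+1}$ and uses $B(a_j, x_{k[j]+1}) > \epsilon$; both give $\sum (p_j + r_j) \leq 2(1+P)$ and the same final bounds.
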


    \begin{proof}
        Recall that the endpoints of the interval $I_j = (a_j, b_j]$, $a_j$ and $b_j$, are chosen from within the support of $X$, denoted as $\{x_1, \dots, x_K\}$. In the $j$-th iteration, the index selected is represented by $k[j]$ such that $x_{k[j]} = b_j$.

        Define $p_j = P(X \in I_j)+ P(X=x_{k[j] + 1})$ and $r_j = \frac{b_j -a_j}{b -a} + \frac{ x_{k[j] + 1} - x_{k[j]}}{b-a}$ for $j \in \{1, \dots, n-1\}$. From $\sum_{j=1}^{n-1} p_j \leq 2P$ and $\sum_{j=1}^{n-1}r_j \leq 2$, we have
        $$
        \sum_{j=1}^{n-1} (p_j + r_j ) \leq 2(1+P).
        $$
        Now, we will get an upper bound of $n$ by estimating a lower bound of $p_j + r_j$.
        For any $j \in \{1, \dots, n-1\}$, since $k[j]$ is the maximum index in  $\{x_{k[j-1]}, \dots, x_{K-1}\}$ such that $B(a_j, x_{k[j]}) \leq \epsilon$, we have
        $$
        B(a_j, x_{k[j] + 1})= (P(X \in I_j) + P(X=x_{k[j] + 1}))\frac{x_{k[j] + 1} -a_j}{4M}  >  \epsilon.
        $$
        From $b_j = x_{k[j]}$ and the definitions of $p_j$ and $r_j$, we have
        \begin{align*}
            \epsilon &< (P(X \in I_j) + P(X=x_{k[j] + 1})) \frac{b_j -a_j + x_{k[j]+1} - x_{k[j]}}{4M} \\
            &= p_j r_j \frac{b-a}{4M}.
        \end{align*}
        Thus, we have
        $$
        \frac{4M\epsilon}{b-a} <  p_jr_j.
        $$
        Following the almost same procedure as in the continuous case, we obtain the following result.
        $$
        n \leq \frac{1+P}{2\sqrt{M}} \sqrt{\frac{b - a}{\epsilon}} + 1.
        $$
    \end{proof}
    \newpage

    \section{Lower bounds of the breakpoints}
    In this section, we derive a lower bound on the number of breakpoints to achieve a given absolute error $\epsilon > 0$ when we use a piecewise linear approximation based on the approach in Section 2. Table \ref{tab:result} summarizes the results from Section 3 and this section. From these results, we see that the upper bounds of the number of breakpoints in our proposed algorithm are close to the lower bounds.
    \begin{table}[ht]
        \centering
        \begin{tabular}{lcc}
            \toprule
            & Continuous                        & Discrete                          \\ \hline
            Lower Bound                                    & $\frac{\sqrt{2}}{4} \approx 0.36$ & $\frac{\sqrt{2}}{2} \approx 0.71$ \\ \hline
            Upper Bound of $B_{\text{exact}}$ or $B_{1/4}$ & $\frac{1}{2}$                     & 1                                 \\
            \bottomrule
        \end{tabular}
        \caption{Coefficients of the Upper and Lower Bounds in Terms of \(\sqrt{\frac{b-a}{\epsilon}}\)}

        \label{tab:result}
    \end{table}

    We first introduce a common setting in discrete or continuous cases. For any given $a$, $b$, $\epsilon$ such that $a < b$ and $\epsilon > 0$, consider a necessary number of intervals, whose approximation error is within $\epsilon$. Define
    \begin{equation}
        \label{def:N}
        N \coloneqq \left\lceil \frac{\sqrt{2}}{2} \sqrt{\frac{W}{\epsilon}} \right\rceil + 1 > \frac{\sqrt{2}}{2} \sqrt{\frac{W}{\epsilon}} ,
    \end{equation}
    where $W = b - a$. Define the following $N$ equally spaced points in $(a, b]$:
    $$
    x_k = a + \frac{W}{N}k \quad (k = 1, \dots N).
    $$
    Roughly speaking, we consider a random variable that takes the value $x_k$ with probability $1/N$ in both the discrete and continuous cases.

    \subsection{Discrete Case}
    Let $X$ be the discrete random variable, whose support is $\{x_1, \dots, x_{N}\}$
    and probability function is
    $$
    p_{X}(x) =  \left \{\begin{array}{ll}
                            \frac{1}{N} & \text{if} \  x = x_k \ (k=1,\dots,N), \\
                            0           & \text{otherwise.}                    \\
    \end{array}
    \right .
    $$
    The distribution of \( X \) is illustrated in Figure~\ref{fig_3}.
    \begin{figure}[ht]
        \centering 
        \includegraphics[width=0.6\textwidth]{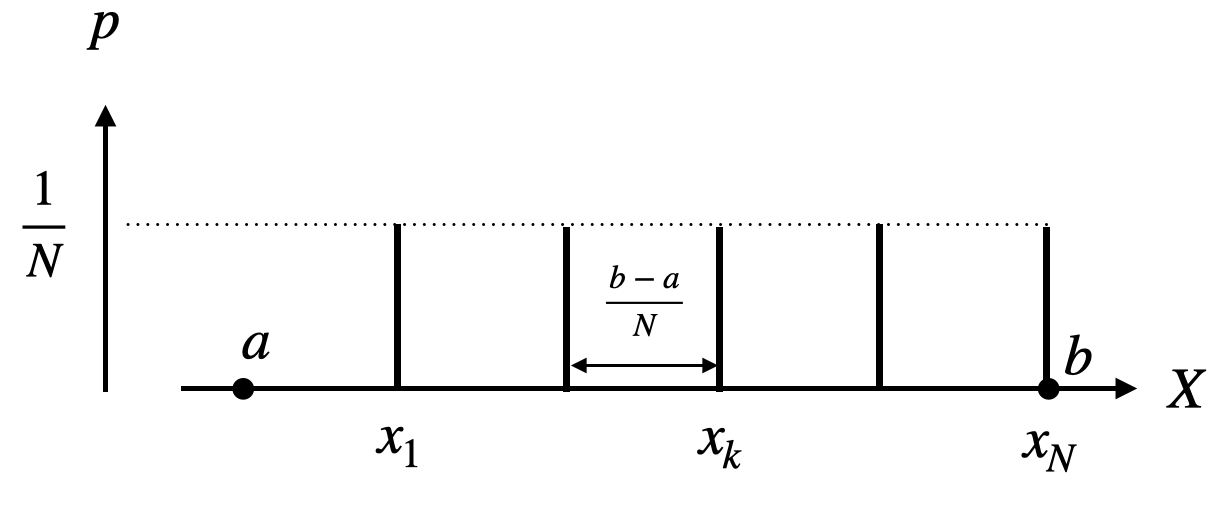} 
        \caption{This is a caption.} 
        \label{fig_3} 
    \end{figure}

    \begin{lemma}
        \label{lemma:low_discrete}
        For any $I=(a', b'] \subseteq (a, b]$,
        $$
        P(X \in I) > \frac{1}{N} \Rightarrow  \Delta_{X}(I) > \epsilon.
        $$
    \end{lemma}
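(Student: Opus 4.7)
The plan is to apply the equality in Theorem~\ref{theorem:delta}, namely $\Delta_X(I) = \E_{a',\mu}[\mu - X]$, and evaluate the right-hand side explicitly using the uniform structure of $X$ on the equispaced support. Because each atom carries mass exactly $1/N$, the strict hypothesis $P(X \in I) > 1/N$ forces $I$ to contain a block of at least two consecutive support points $x_j, x_{j+1}, \dots, x_{j+k-1}$ with $k \geq 2$; the conditional distribution of $X$ given $X \in I$ is then uniform on these $k$ atoms, so the conditional mean equals the arithmetic average $\mu = x_j + \frac{(k-1)W}{2N}$, independently of the exact positions of $a'$ and $b'$.

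Next I would compute $\E_{a',\mu}[\mu - X] = \frac{1}{N}\sum_{i:\, x_{j+i} \leq \mu}(\mu - x_{j+i})$ by identifying the atoms lying in $(a',\mu]$ as those with index offsets $i \leq (k-1)/2$, i.e.\ the first $m = \lceil k/2 \rceil$ atoms of the block. Using the arithmetic-progression structure, a short symbolic manipulation yields the closed form $\Delta_X(I) = \frac{W}{2N^2}\, m(k-m)$, which is positive for $k \geq 2$, monotone nondecreasing in $k$, and reduces to $\frac{W}{2N^2}$ in the minimal case $k = 2$. This reduces the lemma to a comparison of an explicit algebraic quantity with $\epsilon$.

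The final step is to plug in the defining strict inequality $N > \frac{\sqrt{2}}{2}\sqrt{W/\epsilon}$ (equivalently $N^2 > W/(2\epsilon)$) and conclude $\Delta_X(I) > \epsilon$. The main obstacle I anticipate is precisely the borderline case $k = 2$, where the quantity $\frac{W}{2N^2}$ sits tightly against $\epsilon$: here I would use the slack from the ceiling operator and the $+1$ padding in $N = \lceil \frac{\sqrt{2}}{2}\sqrt{W/\epsilon} \rceil + 1$, together with the additional fact that when $k = 2$ the interval $(a',b']$ must strictly contain both atoms (so that $b' - a'$ strictly exceeds the atomic spacing $W/N$, giving extra room in the formula), in order to upgrade the comparison to a strict inequality. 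Once the $k = 2$ case is settled, the monotonicity of $m(k-m)$ in $k$ immediately handles every larger block, completing the proof.
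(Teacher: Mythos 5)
Your overall route is the paper's: reduce to a block of consecutive atoms, evaluate $\Delta_X(I)$ exactly via the equality in Theorem~\ref{theorem:delta}, and compare $\frac{W}{2N^2}$ with $\epsilon$. The only structural difference is that the paper disposes of blocks with more than two atoms by monotonicity of $\Delta_X$ under interval inclusion (Lemma~\ref{lemma:lower_delta}), shrinking $I$ to an interval $(x_k-o,x_{k+1}]$ containing exactly two atoms, whereas you derive the closed form $\Delta_X(I)=\frac{W}{2N^2}\,m(k-m)$ with $m=\lceil k/2\rceil$ and use its monotonicity in $k$; that computation is correct and is a legitimate, slightly more explicit substitute for the paper's appeal to Lemma~\ref{lemma:lower_delta}.

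The gap is in the final step. From $N>\frac{\sqrt{2}}{2}\sqrt{W/\epsilon}$, i.e.\ $N^2>W/(2\epsilon)$, you obtain $\frac{W}{2N^2}<\epsilon$ --- the opposite of what the lemma requires. The needed inequality $\frac{W}{2N^2}>\epsilon$ is equivalent to the upper bound $N<\frac{1}{\sqrt{2}}\sqrt{W/\epsilon}$, which is what the argument implicitly uses (the direction of the bound displayed in \eqref{def:N} is the wrong one for this purpose, so you have faithfully reproduced a slip in the source, but as written your deduction does not go through). Moreover, your two proposed rescue devices point the wrong way: the ceiling and the ``$+1$'' make $N$ larger and hence $\frac{W}{2N^2}$ smaller, so they consume rather than provide slack; and the observation that for $k=2$ the interval must satisfy $b'-a'>W/N$ is irrelevant, since your own formula shows $\Delta_X(I)$ depends only on which atoms lie in $I$, not on the length of $I$. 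To close the proof you must work with an $N$ satisfying $N<\frac{1}{\sqrt{2}}\sqrt{W/\epsilon}$ (e.g.\ round down rather than add $1$ in the definition), after which the case $k=2$ gives $\frac{W}{2N^2}>\epsilon$ strictly and your monotonicity in $k$ handles all larger blocks.
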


    \begin{proof}
        Let $I \subseteq (a, b]$ be an interval such that $|I \cap \{x_1, \dots, x_N\}| \geq 2$. Without loss of generality, from Lemma \ref{lemma:lower_delta}, for some small positive $o > 0$ and $k \in \{1, \dots, N-1\}$, we assume $I=(x_{k}-o, x_{k+1}]$. From Lemma \ref{lemma:delta}, by defining $\mu = \E[X \mid X \in I] = \frac{x_k + x_{k+1}}{2}$, we have
        $$
        \Delta_{X}(I) = \E_{x_k-o, \mu}[\mu-X] = \E_{x_k,\mu}[\mu- X]
        $$
        where the last equality holds since \(X\) can only take the value \(x_k\) with probability $1/N$ between \(x_k - o\) and  \(\mu\).
        Finally we have
        $$
        \E_{x_k, \mu}[\mu-X] =   \left(\frac{x_k + x_{k+1}}{2} - x_k \right) \cdot \frac{1}{N} = \frac{W}{2N^2} > \epsilon,
        $$
        where we use $x_{k+1} - x_{k} = \frac{W}{N}$ and the last inequality is from \eqref{def:N}.
    \end{proof}

    \begin{theorem}
        Let $\tilde{X}_L$ be a discrete random variable induced by piecewise linear approximation with $\mathcal{I} $ of the random variable $X$. Then,
        $$
        e_{X, \tilde{X}}\leq \epsilon \Rightarrow | \mathcal{I} | > N.
        $$
    \end{theorem}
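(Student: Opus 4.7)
The plan is to prove the contrapositive: if $|\mathcal{I}| \leq N$, then $e_{X,\tilde{X}} > \epsilon$. The argument combines a straightforward pigeonhole step with Lemma \ref{lemma:low_discrete}, both of which are already in hand.

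First, I would unpack the counting. Since $\mathcal{I} = (I_0, I_1, \dots, I_n, I_{n+1})$ consists of $n+2$ elements, the assumption $|\mathcal{I}| \leq N$ forces at most $n \leq N - 2$ interior intervals. These $n$ intervals partition $(a, b]$, and the support points $x_1, \dots, x_N$ of $X$ all lie in $(a, b]$ by construction, since $x_k = a + kW/N$ with $k \in \{1, \dots, N\}$. Hence every atom of $X$ sits inside some $I_j$ with $j \in \{1, \dots, n\}$, and none is wasted in $I_0$ or $I_{n+1}$, keeping the pigeonhole count clean.

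Next, I would apply pigeonhole directly: distributing $N$ atoms over $n \leq N - 2$ interior intervals forces some $I_j$ to contain at least two atoms. Since each atom carries probability exactly $1/N$, this yields $P(X \in I_j) \geq 2/N > 1/N$, and Lemma \ref{lemma:low_discrete} then gives $\Delta_X(I_j) > \epsilon$. Combined with the identity $e_{X,\tilde{X}} = \max_{j' \in \{1,\dots,n\}} \Delta_X(I_{j'})$ established in Section 3, this produces $e_{X,\tilde{X}} > \epsilon$, the desired contradiction.

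I do not anticipate a serious obstacle. The only fiddly points are the bookkeeping around the convention $|\mathcal{I}| = n + 2$ and the trivial edge case $N \leq 2$, where $|\mathcal{I}| \geq n + 2 \geq 3 > N$ automatically and the conclusion holds vacuously. Once these are dispensed with, the entire proof reduces to one line of pigeonhole followed by a direct appeal to Lemma \ref{lemma:low_discrete}, so the real work has already been done in setting up the specific distribution of $X$ and proving that lemma.
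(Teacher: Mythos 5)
Your proposal is correct and is essentially the paper's argument in contrapositive form: both hinge on Lemma~\ref{lemma:low_discrete} together with the decomposition $e_{X,\tilde{X}} = \max_{j} \Delta_X(I_j)$, the paper directly summing the bounds $P(X \in I) \leq 1/N$ over $\mathcal{I}$ to get $1 < |\mathcal{I}|/N$, while you equivalently pigeonhole the $N$ atoms of mass $1/N$ into the $n \leq N-2$ interior intervals. Your bookkeeping ($|\mathcal{I}| = n+2$, all atoms lying in $(a,b]$, the vacuous case $N \leq 2$) is sound, so no gap remains.
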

    \begin{proof}
        Let $\tilde{X}$ be a discrete random variable induced by piecewise linear approximation with $\mathcal{I} $ of a random variable $X$ such that $\max_{s \in (a, b]} |f_{\tilde{X}}(s) -f_X(s)| \leq \epsilon$. For any $I \in \mathcal{I}$, from $\Delta_X(I) \leq \epsilon$ and Lemma \ref{lemma:low_discrete},
        $$
        1 = \sum_{I \in \mathcal{I}} P(X \in I) < \frac{|\mathcal{I}|}{N}.
        $$
    \end{proof}

    \subsection{Continuous Case}
    For some small $w > 0$, let $X$ be the continuous random variable, whose support is $S \coloneqq \cup_{k=1}^{N} (x_k -w, x_k]$
    and probability function is
    \begin{equation}
        p_X(x) =  \left \{\begin{array}{ll}
                              \frac{1}{Nw} & \text{if} \  x \in S, \\
                              0            & \text{otherwise.}    \\
        \end{array}
        \right .
    \end{equation}
    The distribution of \( X \) is illustrated in Figure~\ref{fig_4}.

    \begin{figure}[ht]
        \centering 
        \includegraphics[width=0.6\textwidth]{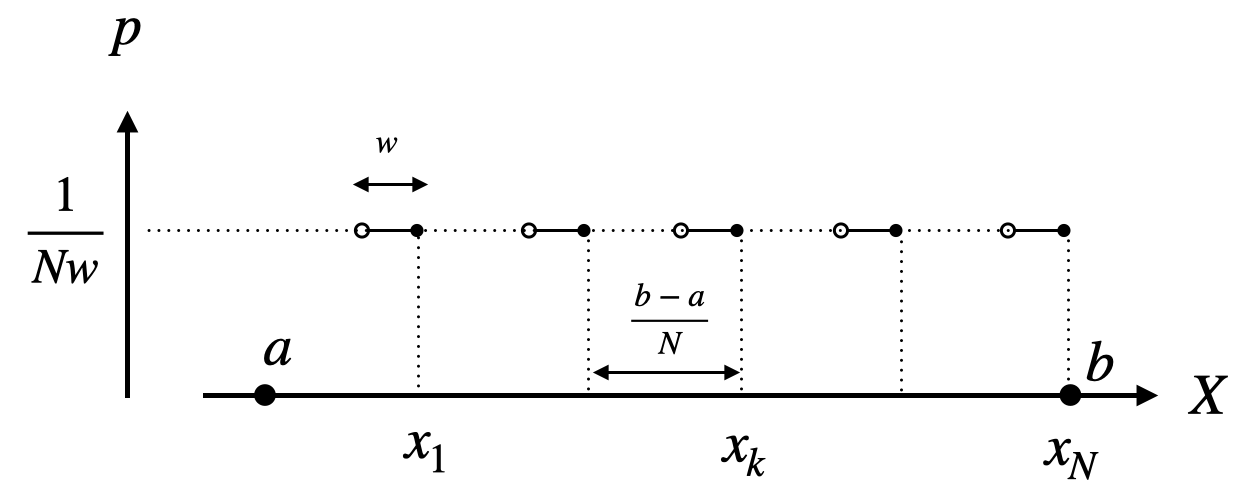} 
        \caption{distribution of $X$} 
        \label{fig_4} 
    \end{figure}

    \begin{lemma}
        \label{lemma:low_cont}
        For $I=(a', b'] \subseteq (a, b]$,
        $$
        P(X \in I) \geq \frac{2}{N} \Rightarrow  \Delta_{X}(I) > \epsilon.
        $$
    \end{lemma}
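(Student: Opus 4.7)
The plan is to mirror the proof of Lemma~\ref{lemma:low_discrete}, adapting it to the continuous setting where the support of $X$ decomposes into $N$ narrow chunks $(x_k - w, x_k]$, each carrying probability $1/N$. The hypothesis $P(X \in I) \geq 2/N$ plays the role of $P(X \in I) > 1/N$ in the discrete case: it forces $|I \cap S| \geq 2w$, i.e., that $I$ overlaps at least two chunks worth of support.

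\emph{Step 1 (WLOG reduction).} Using the monotonicity provided by Lemma~\ref{lemma:lower_delta} together with the approximate translation invariance of the distribution of $X$ across chunks, I would reduce to the canonical minimal case $I = (x_k - w, x_{k+1}]$ for some $k \in \{1, \dots, N-1\}$. This interval covers two adjacent chunks fully and attains $P(X \in I) = 2/N$ with equality.

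\emph{Step 2 (Compute $\mu$).} Since $X$ is uniform on each chunk, the conditional mean on chunk $j$ is its midpoint $x_j - w/2$, and the two chunks in $I$ have equal conditional weight $1/2$, giving
\[
\mu = \E[X \mid X \in I] = \frac{(x_k - w/2) + (x_{k+1} - w/2)}{2} = \frac{x_k + x_{k+1}}{2} - \frac{w}{2}.
\]

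\emph{Step 3 (Compute $\Delta_X(I)$).} By Lemma~\ref{lemma:delta}, $\Delta_X(I) = \E_{a', \mu}[\mu - X]$ with $a' = x_k - w$. For $w$ small enough that $\mu < x_{k+1} - w$, the support of $X$ in $(a', \mu]$ is exactly the single chunk $(x_k - w, x_k]$, and integrating against the density $1/(Nw)$ gives
\[
\Delta_X(I) = \int_{x_k - w}^{x_k} \frac{\mu - x}{Nw}\, dx = \frac{1}{N}\left(\mu - x_k + \frac{w}{2}\right) = \frac{x_{k+1} - x_k}{2N} = \frac{W}{2N^2}.
\]
Applying \eqref{def:N} then yields the desired strict inequality $\Delta_X(I) > \epsilon$.

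The main obstacle is Step 1. Unlike the discrete case, the minimal interval with $P(X \in I) = 2/N$ is not unique: besides the canonical $(x_k - w, x_{k+1}]$, small translations such as $(x_k - w/2, x_{k+2} - w/2]$ also satisfy the probability bound but do not contain any canonical chunk pair. To handle these, I expect to combine (i) the monotonicity in Lemma~\ref{lemma:lower_delta} for intervals that do contain some $(x_{k'} - w, x_{k'+1}]$, and (ii) a direct symmetry argument showing that translated minimal intervals give exactly the same value $\Delta_X = W/(2N^2)$. Once this reduction is established, Steps 2 and 3 are routine adaptations of the discrete argument, with the auxiliary width $w$ cancelling from the final expression.
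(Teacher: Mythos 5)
Your Steps 2 and 3 are exactly the paper's argument: reduce to the canonical interval $I=(x_k-w,x_{k+1}]$ via Lemma~\ref{lemma:lower_delta}, compute $\mu=\frac{x_k+x_{k+1}}{2}-\frac{w}{2}$, and evaluate $\Delta_X(I)=\E_{x_k-w,\mu}[\mu-X]=\frac{W}{2N^2}>\epsilon$ using \eqref{def:N}. So the route is the same as the paper's; the only divergence is your Step 1, where you are actually more careful than the paper, which simply writes ``without loss of generality.'' Your concern is legitimate: an interval with $P(X\in I)\geq 2/N$ need not contain any canonical pair $(x_{k'}-w,x_{k'+1}]$; the exceptional intervals are precisely those that fully contain one chunk and partially overlap the two neighboring chunks with partial masses summing to at least $1/N$ (any interval fully containing two chunks does contain a canonical pair, and Lemma~\ref{lemma:lower_delta} then applies).

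However, the remedy you sketch in (ii) is not correct as stated: translated minimal intervals do \emph{not} all give the same value of $\Delta_X$. Writing the exceptional interval as $I=(x_k-\alpha w,\;x_{k+2}-w+\beta w]$ with $\alpha+\beta=1$, a direct computation gives $\Delta_X(I)=\frac{W}{2N^2}$ only in the symmetric case $\alpha=\beta=\tfrac12$ (your example); for asymmetric splits the value is strictly larger than $\frac{W}{2N^2}$ (the conditional mean $\mu$ shifts toward the heavier side, and the extra contribution does not cancel). Fortunately the deviation goes in the favorable direction: in every exceptional case one still has $\Delta_X(I)\geq\frac{W}{2N^2}>\epsilon$ (for $w$ smaller than $W/N$), which is all the lemma needs. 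So your plan is repairable by replacing the exact-equality symmetry claim with this one-sided bound, proved by the same kind of computation you carry out in Steps 2--3; alternatively one can verify the bound after shrinking $I$ to mass exactly $2/N$. Note that the paper's own proof silently skips this case, so flagging it is a genuine contribution, but as written your proposal leaves the key step resting on a claim that is false in general.
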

    \begin{proof}
        From Lemma \ref{lemma:lower_delta}, without loss of generality, we assume that $I=(x_{k}-w, x_{k+1}]$, where $P(X \in I) = \frac{2}{N}$. From Lemma \ref{lemma:delta}, we have
        $$
        \Delta_{X}(I) = \E_{x_k-w, \mu}[\mu-X],
        $$
        where $\mu = \E[X \mid X \in I] = \frac{x_k + x_{k+1}}{2} - \frac{w}{2}$. For each term, we have
        \begin{align*}
            \E_{x_k -w, \mu}[\mu] &=\frac{1}{Nw} \cdot w \cdot \mu = \frac{\mu}{N}\\
            \E_{x_k -w, \mu}[X]&= \E[X \mid X \in (x_k-w, \mu]] \cdot P(X\in (x_k-w, \mu]) \\
            &= \left(x_k - \frac{w}{2} \right) \cdot \frac{w}{Nw}.
        \end{align*}
        Finally, we get
        \begin{align*}
            \E_{x_k-w, \mu}[\mu-X] &= \left( \frac{x_k + x_{k+1}}{2} - \frac{w}{2}- x_k + \frac{w}{2} \right) \cdot \frac{1}{N}\\
            &= \frac{W}{2N^2} > \epsilon
        \end{align*}
        where we use $x_{k+1} - x_{k} = \frac{W}{N}$ and the last inequality is from the definition of $N$ in \eqref{def:N}.
    \end{proof}

    \begin{theorem}
        Let $\tilde{X}$ be a discrete random variable induced by piecewise linear approximation with $\mathcal{I} $ of the random variable $X$. Then,
        $$
        e_{X, \tilde{X}} \leq \epsilon \Rightarrow | \mathcal{I} | > \frac{N}{2}.
        $$
    \end{theorem}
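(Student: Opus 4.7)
The plan is to mirror the structure of the discrete-case proof almost verbatim, swapping in Lemma \ref{lemma:low_cont} for Lemma \ref{lemma:low_discrete}. The continuous lemma replaces the threshold $1/N$ with $2/N$, so the final counting bound will naturally weaken from $|\mathcal{I}| > N$ to $|\mathcal{I}| > N/2$, matching the statement.

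First, I would assume $e_{X,\tilde{X}} \leq \epsilon$. By the formula $e_{X,\tilde{X}} = \max_{j \in \{1,\dots,n\}} \Delta_X(I_j)$ established just before Lemma \ref{lemma:delta}, this is equivalent to $\Delta_X(I_j) \leq \epsilon$ for every $j \in \{1,\dots,n\}$. Next, I apply the contrapositive of Lemma \ref{lemma:low_cont}, which says that $\Delta_X(I) \leq \epsilon$ forces $P(X \in I) < 2/N$ for every $I \subseteq (a,b]$ in the partition. Finally, since the support $S = \bigcup_{k=1}^N (x_k - w, x_k]$ of $X$ is contained in $(a,b]$, the probabilities of the outer intervals $I_0$ and $I_{n+1}$ vanish, so
$$
1 = \sum_{j=1}^{n} P(X \in I_j) < \sum_{j=1}^{n} \frac{2}{N} = \frac{2n}{N},
$$
which yields $n > N/2$, and hence $|\mathcal{I}| > N/2$.

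I do not anticipate a genuine obstacle here; the heavy lifting has already been done in Lemma \ref{lemma:low_cont}. The only subtlety worth flagging is that the contrapositive must produce a strict inequality $P(X \in I_j) < 2/N$ (not $\leq$), since otherwise the final chain $1 < 2n/N$ could only be $1 \leq 2n/N$, weakening the conclusion. Fortunately Lemma \ref{lemma:low_cont} is stated with $\geq \tfrac{2}{N} \Rightarrow \Delta_X(I) > \epsilon$, whose contrapositive gives exactly the required strict bound.
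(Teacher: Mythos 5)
Your proposal is correct and follows essentially the same route as the paper: invoke the contrapositive of Lemma \ref{lemma:low_cont} to get the strict bound $P(X \in I_j) < \tfrac{2}{N}$ on each interval with $\Delta_X(I_j) \leq \epsilon$, then sum the probabilities to $1$ and conclude $|\mathcal{I}| > \tfrac{N}{2}$. Your explicit remark that the outer intervals $I_0$ and $I_{n+1}$ carry zero probability (since the support lies in $(a,b]$) is a small clarification the paper glosses over, but it does not change the argument.
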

    \begin{proof}
        Let $\tilde{X}$ be a discrete random variable induced by piecewise linear approximation with $\mathcal{I} $ of a random variable $X$ such that $\max_{s \in (a, b]} |f_{\tilde{X}}(s) -f_X(s)| \leq \epsilon$. For any $I \in \mathcal{I}$, from $\Delta_X(I) \leq \epsilon$ and Lemma \ref{lemma:low_cont},
        $$
        1 = \sum_{I \in \mathcal{I}} P(X \in I) < \frac{2|\mathcal{I}|}{N}.
        $$
    \end{proof}

    \newpage

    \section{Numerical Experiments}
    In this section, through numerical experiments, we compare the actual number of breakpoints generated by our proposed algorithms
    with the derived bounds in Section 4 across various distributions\footnote{We have released the proposed algorithm as a Python package at \url{https://github.com/takazawa/piecewise-linearization-first-order-loss},
        where the experimental code is also included.}.

    \subsection{Implementation}
    We implemented our algorithms using Python 3.9 on a Macbook Pro laptop equipped with an Apple M1 Max CPU. We implemented all procedures required for the algorithms and generating distributions using the free scientific computing library, SciPy version 1.9.1 \citep{2020SciPy-NMeth}. For distribution generation, we utilized the distribution classes available in \texttt{scipy.stats}. We performed the calculation of conditional expectations for continuous distributions using the numerical integration function \texttt{quad} in \texttt{scipy.integrate}, with a numerical absolute tolerance set to \(10^{-8}\). Furthermore, we conducted the process of finding the \(b_j\) that satisfies \(B(a_j, b_j) - \epsilon = 0\) in continuous distributions using the \texttt{find\_root} function in \texttt{scipy.optimize}, through binary search, with an absolute tolerance related to \(b_j\) also set to \(10^{-8}\).

    \subsection{Target Distributions}

    Table \ref{tab:dist} summarizes the distributions that are the subject of focus in this study. The \texttt{instance} column lists the instance names, while the \texttt{distribution} column specifies the distribution names. In the \texttt{instance} column, the prefix C or D signifies a continuous or discrete distribution, respectively. In the \texttt{parameter} column, we show the parameters set for each instance. We utilized representative parameters for each distribution; however, for discrete distributions, we set the mean to be approximately 100. Additionally, we represent the approximation intervals in the columns for \(a\) and \(b\). For this study, we set them to be about \(\pm 3\) times the standard deviation from the mean.

    \begin{table}[htb]
        \centering

        {\footnotesize
            \begin{tabular}{lllrr}
                \toprule
                instance & distribution      & parameter           & $a$   & $b$   \\
                \midrule
                C-N1     & Normal            & $\mu=0, \sigma=1$   & -3.0  & 3.0   \\
                C-N2     & Normal            & $\mu=0, \sigma=5$   & -15.0 & 15.0  \\
                C-Exp    & Exponential       & $\lambda=1$         & 0.0   & 4.0   \\
                C-Uni    & Uniform           & $a=0, b=1$          & 0.0   & 1.0   \\
                C-Bet    & Beta              & $\alpha=2, \beta=5$ & 0.0   & 0.8   \\
                C-Gam    & Gamma             & $k=2, \theta=1$     & 0.0   & 6.2   \\
                C-Chi    & Chi-Squared       & $k=3$               & 0.0   & 10.3  \\
                C-Stu    & Student's t       & $\nu=10$            & -3.4  & 3.4   \\
                C-Log    & Logistic          & $\mu=0, s=1$        & -5.4  & 5.4   \\
                C-Lgn    & Lognormal         & $\mu=0, \sigma=1$   & 0.0   & 8.1   \\
                D-Bin    & Binomial          & $n=200, p=0.5$      & 78.0  & 121.0 \\
                D-Poi    & Poisson           & $\lambda=100$       & 70.0  & 130.0 \\
                D-Geo    & Geometric         & $p=0.01$            & 1.0   & 398.0 \\
                D-Neg    & Negative Binomial & $r=100, p=0.5$      & 57.0  & 142.0 \\
                \bottomrule
            \end{tabular}
        }
        \caption{Target Distributions and Intervals} \label{tab:dist}

    \end{table}

    \subsection{Results and Discussion}

    \begin{table}
        \centering

        {\small
            \begin{tabular}{lcrrrrrrrr}
                \toprule
                instance & $\epsilon$ & \multicolumn{5}{c}{the number of intervals} & \multicolumn{3}{c}{error (=$e_{X, \tilde{X}}/ \epsilon$)} \\
                &       & $B_{\text{exact}}$ & $B_{1/8}$ & $B_{1/4}$ & $UB_{1/8}$ & $UB_{1/4}$ & $B_{\text{exact}}$ & $B_{1/4}$ & $B_{1/8}$ \\
                \midrule
                C-N1  & 0.100 & 3                  & 3         & 4         & 3          & 4          & 1.000              & 0.486     & 0.949     \\
                & 0.050 & 4                  & 4         & 6         & 4          & 6          & 1.000              & 0.495     & 0.973     \\
                & 0.010 & 8                  & 8         & 11        & 9          & 13         & 1.000              & 0.499     & 0.996     \\
                \hline
                C-N2  & 0.100 & 6                  & 6         & 8         & 7          & 9          & 1.000              & 0.498     & 0.991     \\
                & 0.050 & 8                  & 8         & 11        & 9          & 13         & 1.000              & 0.499     & 0.996     \\
                & 0.010 & 17                 & 18        & 25        & 20         & 28         & 1.000              & 0.500     & 0.999     \\
                \hline
                C-Exp & 0.100 & 2                  & 3         & 3         & 3          & 4          & 1.000              & 0.490     & 0.955     \\
                & 0.050 & 3                  & 3         & 4         & 4          & 5          & 1.000              & 0.496     & 0.981     \\
                & 0.010 & 7                  & 7         & 9         & 8          & 10         & 1.000              & 0.499     & 0.997     \\
                \hline
                C-Uni & 0.100 & 2                  & 2         & 2         & 2          & 2          & 1.000              & 0.500     & 1.000     \\
                & 0.050 & 2                  & 2         & 3         & 2          & 3          & 1.000              & 0.500     & 1.000     \\
                & 0.010 & 4                  & 4         & 5         & 4          & 6          & 1.000              & 0.500     & 1.000     \\
                \hline
                C-Bet & 0.100 & 1                  & 1         & 2         & 1          & 2          & 0.641              & 0.418     & 0.641     \\
                & 0.050 & 2                  & 2         & 2         & 2          & 2          & 1.000              & 0.425     & 0.837     \\
                & 0.010 & 3                  & 3         & 5         & 4          & 5          & 1.000              & 0.495     & 0.976     \\
                \hline
                C-Gam & 0.100 & 3                  & 3         & 4         & 3          & 4          & 1.000              & 0.491     & 0.939     \\
                & 0.050 & 4                  & 4         & 6         & 4          & 6          & 1.000              & 0.496     & 0.983     \\
                & 0.010 & 8                  & 9         & 12        & 9          & 13         & 1.000              & 0.499     & 0.997     \\
                \hline
                C-Chi & 0.100 & 4                  & 4         & 5         & 4          & 6          & 1.000              & 0.495     & 0.974     \\
                & 0.050 & 5                  & 5         & 7         & 6          & 8          & 1.000              & 0.497     & 0.991     \\
                & 0.010 & 11                 & 11        & 15        & 12         & 16         & 1.000              & 0.499     & 0.998     \\
                \hline
                C-Stu & 0.100 & 3                  & 3         & 4         & 3          & 5          & 1.000              & 0.483     & 0.947     \\
                & 0.050 & 4                  & 4         & 6         & 5          & 6          & 1.000              & 0.494     & 0.967     \\
                & 0.010 & 8                  & 9         & 12        & 10         & 13         & 1.000              & 0.499     & 0.995     \\
                \hline
                C-Log & 0.100 & 4                  & 4         & 5         & 4          & 6          & 1.000              & 0.491     & 0.961     \\
                & 0.050 & 5                  & 5         & 7         & 6          & 8          & 1.000              & 0.496     & 0.983     \\
                & 0.010 & 11                 & 11        & 15        & 12         & 17         & 1.000              & 0.499     & 0.997     \\
                \hline
                C-Lgn & 0.100 & 3                  & 3         & 4         & 4          & 5          & 1.000              & 0.480     & 0.892     \\
                & 0.050 & 4                  & 4         & 6         & 5          & 7          & 1.000              & 0.492     & 0.960     \\
                & 0.010 & 8                  & 9         & 12        & 10         & 15         & 1.000              & 0.498     & 0.993     \\
                \hline
                D-Bin & 0.100 & 7                  & 7         & 12        & 15         & 21         & 0.979              & 0.445     & 0.979     \\
                & 0.050 & 11                 & 12        & 17        & 21         & 30         & 0.922              & 0.497     & 0.889     \\
                & 0.010 & 27                 & 27        & 33        & 47         & 66         & 0.970              & 0.452     & 0.931     \\
                \hline
                D-Poi & 0.100 & 9                  & 9         & 13        & 18         & 25         & 0.978              & 0.440     & 0.969     \\
                & 0.050 & 12                 & 13        & 19        & 25         & 35         & 0.938              & 0.437     & 0.880     \\
                & 0.010 & 33                 & 34        & 42        & 55         & 78         & 0.995              & 0.433     & 0.995     \\
                \hline
                D-Geo & 0.100 & 20                 & 20        & 29        & 44         & 63         & 0.996              & 0.497     & 0.996     \\
                & 0.050 & 29                 & 29        & 41        & 63         & 88         & 0.995              & 0.500     & 0.995     \\
                & 0.010 & 66                 & 68        & 98        & 139        & 197        & 0.983              & 0.496     & 0.995     \\
                \hline
                D-Neg & 0.100 & 10                 & 10        & 15        & 21         & 30         & 0.986              & 0.496     & 0.992     \\
                & 0.050 & 15                 & 15        & 22        & 30         & 42         & 0.992              & 0.451     & 0.992     \\
                & 0.010 & 41                 & 42        & 55        & 66         & 93         & 0.961              & 0.470     & 0.948     \\
                \bottomrule
            \end{tabular}
        }
        \caption{Number of Intervals and Errors}
        \label{tab:all_results}
    \end{table}

    \subsubsection{Details of Table Fields}

    Table \ref{tab:all_results} presents the results obtained in this study. We conducted experiments by varying the allowable error for each instance as \( \epsilon \in \{0.1, 0.05, 0.01\} \) (as shown in the column \( \epsilon \)).
    We conducted experiments on Algorithm \( B \in \{B_{\text{exact}}, B_{1/4}, B_{1/8}\} \) and compared the following 2 items:
    \begin{description}
        \item [\texttt{Number of Interval Column}:] We compared the number of intervals dividing the interval \( (a,b] \) output by each algorithm to the upper bound obtained in this study. These are displayed in the \texttt{intervals} column. The \( B \in \{B_{\text{exact}}, B_{1/4}, B_{1/8}\} \) columns indicate the actual number of intervals output by each algorithm \( B \). The columns for \( B \in \{B_{\text{exact}}, B_{1/4}, B_{1/8}\} \) indicate the actual number of intervals output by each algorithm \( B \).
        More precisely, this refers to \( n \) in Algorithm 1.
        The \( UB \in \{ UB_{1/4}, UB_{1/8}\} \) columns indicate the upper bounds of the breakpoints, which are rounded to integers, shown in Table \ref{tab:breakpoints} in Section 3. Here, \( UB_{1/4} \) and \( UB_{1/8} \) are the upper bounds corresponding to Algorithm \( B_{1/4} \  (B_{\text{exact}}) \) and \( B_{1/8} \), respectively.

        \item [\texttt{Error Column:}] We conducted a comparison between the input \( \varepsilon \) and the error \( e_{X, \tilde{X}} \) associated with \( \tilde{X} \) obtained from the partition output by each algorithm.
        We analytically calculated $e_{X, \tilde{X}}$ based on Corollary \ref{theorem:approx}. Although the results are not included in the paper, we confirmed that the error calculated analytically based on Corollary \ref{theorem:approx} matched the error calculated by SciPy's optimization library. In the table, the error column lists the ratio obtained by dividing the actual error by \( \epsilon \).
    \end{description}

    \subsubsection{Results}
    \begin{enumerate}
        \item Difference between Algorithms:

        \hspace{1em} Although not shown in the table, the computation time for each algorithm was less than 0.2 seconds.

        \hspace{1em} First, we discuss the number of intervals. For any distribution, the number of intervals output by \( B_{\text{exact}} \) and \( B_{1/8} \) differed by only 1-2. The number of intervals for \( B_{1/4} \) was approximately 40\% greater in each instance compared to \( B_{\text{exact}} \) or \( B_{1/8} \).

        \hspace{1em} Next, we discuss the error. For \( B_{\text{exact}} \), the error values for continuous distributions were all 1.0. This is because, except for the last interval, we selected intervals where the error was precisely \( \epsilon \). On the other hand, in the case of discrete distributions, it was not possible to select a point where the error was exactly \( \epsilon \), resulting in slightly smaller values. The errors for \( B_{1/4} \) and \( B_{1/8} \) were slightly less than 0.5 and 1 for most distributions, respectively.

        \item Difference between Actual Values and Upper Bounds:

        \hspace{1em} In continuous distributions, we found that the difference between the actual measured values of intervals and the upper bounds was small. Specifically, the difference between the values of \( B_{\text{exact}} \) or \( B_{1/8} \) and \( UB_{1/8} \), as well as \( B_{1/4} \) and \( UB_{1/4} \), was generally 2 or less. In the case of discrete distributions, the upper bounds were approximately twice the actual measurements.
    \end{enumerate}

    \subsubsection{Discussion}

    We observe that many values in the error column of Algorithm \(B_{1/8}\) are close to 1.
    This suggests that, in many cases, the approximation formula for \eqref{eq:approx} can be valid and a almost tight upper bound of the approximation error.
    Furthermore, considering that the number of breakpoints output by \( B_{1/8} \) hardly differs from \( B_{\text{exact}} \),
    we conclude that \( B_{1/8} \) is an algorithm that outputs near-optimal solutions.

    Regarding continuous distributions, the obtained upper bounds \( UB_{1/4} \) and \( UB_{1/8} \) can be regarded as good approximations to the output values of the corresponding algorithms.
    In other words, \( UB \) can be considered as an expression representing the trade-off between error and the number of breakpoints.
    Therefore, it is conceivable for users to refer to this trade-off when setting an appropriate acceptable error.

    On the other hand, for discrete distributions, we confirm a discrepancy of about twice between the upper bound of intervals and the algorithm output.
    We speculate that this is because, under the worst-case analysis due to the nature of discrete distributions within the proof of Theorem \ref{theorem_discrete}, the upper bound becomes twice as large compared to continuous distributions.
    However, looking at the experimental results, the number of intervals output by the algorithm is close to $\frac{1}{2 \sqrt{2}} \sqrt{\frac{b-a}{\epsilon}}$ which is the upper bound obtained for continuous distributions. Thus, for the discrete case, $\frac{1}{2 \sqrt{2}} \sqrt{\frac{b-a}{\epsilon}}$ may also be useful as a practical guideline for the minimum number of intervals for given error $\epsilon$. Note that this upper bound may not always align with the actual output. If we prepare the number of intervals for the number of possible values of the discrete random variable, the error at that time can become 0. 
     Thus, even when the number of possible values for the random variable is small, if \(b-a\) is large, the actual number of intervals can be much smaller than the bound.

    Lastly, the following table summarizes these observations.
    \begin{table}[htb]
        \centering
        \begin{tabular}{ccc}
            \toprule
            algorithm                               & actual error       & actual intervals                                      \\
            \midrule
            \( B_{\text{exact}} \) and  \( B_{1/8} \) & \( \epsilon \)     & \( \frac{1}{2 \sqrt{2}} \sqrt{\frac{b-a}{\epsilon}}\) \\
            \( B_{1/4} \)                           & \( 0.5 \epsilon \) & \( \frac{1}{2}\sqrt{\frac{b-a}{\epsilon}} \)          \\
            \bottomrule
        \end{tabular}
        \caption{Overview of Experimental Results for Allowable Error $\epsilon$}
        \label{tab:summary}
    \end{table}

    \section{Conclusions}
    This study investigated the trade-off between error and the number of breakpoints when performing a piecewise linear approximation of $f_X(s)=\E[\min(s, X)]$.
    As a result, when conducting piecewise linear approximation based on the method proposed by \cite{rossi2014piecewise}, we obtained \( \frac{1}{2\sqrt{2}}\sqrt{\frac{W}{\epsilon}} \) as an approximately upper bound for the minimum number of breakpoints required to achieve an error less than \( \epsilon \) through theoretical analysis and numerical experiments, where \( W \) is the width of the approximation interval.
    Subsequently, we also proposed efficient algorithms to obtain a piecewise linear approximation with a number of breakpoints close to this bound. These results provide a guideline for the error and number of breakpoints to consider when we use piecewise linear approximation for a general first-order loss function.

    \section*{Acknowledgment}
    This work was partially supported by JSPS KAKENHI Grant Numbers JP21K14368.
    \bibliographystyle{elsarticle-harv}
    \bibliography{cas-refs}




    \appendix

    \section{Proofs}

    \begin{lemma}
        \label{lemma:concave}
        $f_X$ is concave on $\mathbb{R}$.
    \end{lemma}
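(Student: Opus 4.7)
The plan is to reduce concavity of $f_X$ to the obvious concavity of the integrand $s \mapsto \min(s,X)$ for each fixed realization of $X$, and then transfer this pointwise property through the expectation operator.

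First, I would fix an arbitrary realization $x \in \mathbb{R}$ of $X$ and observe that the map $g_x : s \mapsto \min(s,x)$ is concave on $\mathbb{R}$, since it is the pointwise minimum of the two affine (hence concave) functions $s \mapsto s$ and $s \mapsto x$, and the pointwise minimum of concave functions is concave. Concretely, for any $s_1, s_2 \in \mathbb{R}$ and any $\lambda \in [0,1]$,
\begin{equation*}
\min(\lambda s_1 + (1-\lambda) s_2,\, x) \;\geq\; \lambda \min(s_1, x) + (1-\lambda)\min(s_2, x).
\end{equation*}

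Next I would take expectation over $X$ on both sides. Since $|\min(s,X)| \le |s| + |X|$ and $X$ has finite mean by the standing assumption in Section~2.3, the function $f_X(s) = \E[\min(s,X)]$ is well-defined and finite for every $s \in \mathbb{R}$. By monotonicity and linearity of expectation, the inequality above becomes
\begin{equation*}
f_X(\lambda s_1 + (1-\lambda) s_2) \;\geq\; \lambda f_X(s_1) + (1-\lambda) f_X(s_2),
\end{equation*}
which is exactly the definition of concavity of $f_X$ on $\mathbb{R}$. The same argument works uniformly whether $X$ is continuous or discrete, since it only uses the linearity and monotonicity of expectation.

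I do not expect any real obstacle: the only non-trivial point is checking integrability so that the expectation can be exchanged with the inequality, and this follows immediately from the assumption that $X$ has an expected value. The argument is essentially that expectation is a positive linear operator and therefore preserves concavity of a parametric family of concave integrands.
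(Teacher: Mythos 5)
Your proof is correct and follows essentially the same route as the paper: pointwise concavity of $s \mapsto \min(s,x)$ transferred through the monotone, linear expectation operator. The extra remarks on integrability and the min-of-affine-functions justification are fine but do not change the argument.
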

    \begin{proof}
        For any $\alpha \in [0, 1]$ and $s, t \in \mathbb{R}$,
        \begin{align*}
            f_X(\alpha s + (1-\alpha)t) &= \E[\min(\alpha s + (1-\alpha)t, X)] \\
            &\geq  \E [ \alpha \min (s, X)  + (1-\alpha) \min (t, X) ]\\
            &= \alpha \E [\min(s, X)] + (1-\alpha) \E [ \min (t, X) ],
        \end{align*}
        where the inequality is from that $c(s) = \min(s, X)$ is concave.
    \end{proof}

    \begin{lemma}
        \label{lemma:lower_delta}
        Let $X$ be a random variable. Let $I=(a, b]$ and $I' = (a', b']$ such that $P(X \in I) >0$ and $I \subseteq I'$. Then,
        $$
        \Delta_X (I') \geq \Delta_X(I).
        $$
    \end{lemma}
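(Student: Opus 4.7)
The plan is to represent $\Delta_X(I)$ as the maximum of a pointwise error function $h_X(I, \cdot)$, prove the pointwise bound $h_X(I', s) \geq h_X(I, s)$ for every $s \in I$, and then take suprema over $s \in I \subseteq I'$.

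By Lemma \ref{lemma:delta}, $\Delta_X(I) = \max_{s \in I} h_X(I, s)$ where
$$h_X(I, s) \coloneqq P(X \in I) \min(s, \mu_I) - \E_{a, b}[\min(s, X)], \quad \mu_I \coloneqq \E[X \mid X \in I],$$
and $I = (a, b]$. A direct calculation, splitting $\E_{a, b}[\min(s, X)]$ at $X = s$, yields the convenient formulas
$$h_X(I, s) = \begin{cases} \E_{a, s}[s - X] & \text{if } s \leq \mu_I, \\ \E_{s, b}[X - s] & \text{if } s \geq \mu_I, \end{cases}$$
which agree at $s = \mu_I$ via the identity $\E_{a, b}[X - s] = P(X \in I)(\mu_I - s)$.

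Write $I' = (a', b']$ with $a' \leq a$ and $b \leq b'$. Fix $s \in I$ and split into four cases by the signs of $s - \mu_I$ and $s - \mu_{I'}$. In the two same-side cases, both $h_X(I, s)$ and $h_X(I', s)$ are given by the same formula and the inequality reduces to interval monotonicity: either $\E_{a', s}[s - X] \geq \E_{a, s}[s - X]$ (since $s - X \geq 0$ on $(a', a]$), or its mirror $\E_{s, b'}[X - s] \geq \E_{s, b}[X - s]$ (since $X - s \geq 0$ on $(b, b']$). In the mixed case $\mu_{I'} < s \leq \mu_I$, the identity applied to $I$ gives $\E_{a, b}[X - s] = P(X \in I)(\mu_I - s) \geq 0$, hence $\E_{s, b}[X - s] \geq \E_{a, s}[s - X]$; combining with $\E_{s, b'}[X - s] \geq \E_{s, b}[X - s]$ yields $h_X(I', s) \geq h_X(I, s)$. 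The symmetric mixed case $\mu_I < s \leq \mu_{I'}$ is handled analogously, this time with the identity producing the opposite sign and the monotonicity operating on the left endpoint instead.

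From the pointwise bound the conclusion follows by taking suprema:
$$\Delta_X(I') = \max_{s \in I'} h_X(I', s) \geq \max_{s \in I} h_X(I', s) \geq \max_{s \in I} h_X(I, s) = \Delta_X(I).$$
The main obstacle is the two mixed cases, where $h_X(I, s)$ and $h_X(I', s)$ are given by structurally different expressions (a left-tail versus a right-tail integral); the key observation is that the inequalities $s \lessgtr \mu_J$ are equivalent to sign conditions on $\E_J[X - s]$, which is precisely the quantity whose splitting at $X = s$ reconciles the two formulas.
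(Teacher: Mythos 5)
Your proof is correct, but it takes a genuinely different route from the paper. The paper first reduces to two one-sided extensions ($I^L=(a',b]$ and $I^R=(a,b']$), then invokes the closed form $\Delta_X(I)=\E_{a,\mu_I}[\mu_I-X]$ from Theorem~\ref{theorem:delta} (i.e.\ the fact that the maximum is attained at the conditional mean) and compares the two closed-form values directly, which requires tracking how the conditional mean shifts when the interval is enlarged. You instead stay at the level of Lemma~\ref{lemma:delta}: you re-derive the two-sided representation $h_X(I,s)=\E_{a,s}[s-X]$ for $s\le\mu_I$ and $h_X(I,s)=\E_{s,b}[X-s]$ for $s\ge\mu_I$ (essentially the unconditional form of the computation in Lemma~\ref{lemma:delta_max}), prove the pointwise domination $h_X(I',s)\ge h_X(I,s)$ for every $s\in I$ via a four-case analysis, and conclude by taking suprema over $I\subseteq I'$. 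Your mixed cases are handled correctly: the sign of $\E_{a,b}[X-s]=P(X\in I)(\mu_I-s)$ is exactly what lets you pass between the left-tail and right-tail expressions, and the remaining step is monotonicity of the tail integrals in the endpoints. What your approach buys is that it treats both endpoint extensions simultaneously, never needs to know where the maximum of the error is attained, and avoids comparing the two errors evaluated at different conditional means (the slightly delicate point in the paper's argument); what the paper's approach buys is brevity, since Theorem~\ref{theorem:delta} already packages the maximum into a single expectation and the comparison is then a three-line computation.
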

    \begin{proof}
        Let $I=(a, b]$. Define $I^L=(a', b]$ and $I^R = (a, b']$ for any $a' \leq a$ and $b' \geq b$. It suffices to show that $\Delta_X (I^L) \geq \Delta_X (I)$ and $\Delta_X (I^R) \geq \Delta_X (I)$.

        First, we will show that $\Delta_X (I^L) \geq \Delta_X (I)$. From Theorem \ref{theorem:delta},
        $$
        \Delta_X (I^L) - \Delta_X (I) = \E_{a, \mu_L}[\mu_L - X] - \E_{a, \mu_I}[\mu_I - X],
        $$
        where $\mu_L = \E[X \mid X \in I^L]$ and $\mu_I = \E[X \mid X \in I]$.
        Then, we get
        \begin{align*}
            & \E_{a, \mu_L}[\mu_L - X] - \E_{a, \mu_I}[\mu_I - X]\\
            &= \E_{a, \mu_I}[\mu_L - X] + \E_{\mu_I, \mu_L}[\mu_L - X] - \E_{a, \mu_I}[\mu_I - X]\\
            &= \E_{a, \mu_I}[\mu_L - \mu_I] + \E_{\mu_I, \mu_L}[\mu_L - X] \geq 0.
        \end{align*}
        Thus, we have $ \Delta_X (I^L) \geq \Delta_X (I)$.
        We can also show $\Delta_X (I^R) \geq \Delta_X (I)$ similarly. Therefore we have $\Delta_X (I') \geq \Delta_X(I)$.
    \end{proof}

    \subsection{Reduction of General Loss Function to Min Function}\label{sec_reduction}

    We show that the general loss function can be expressed as a sum of $f_X$ and an affine function.
    \begin{lemma}
        For any $A_1, A_2, B_1, B_2 \in \mathbb{R}$,
        $$
        \min(A_1 + B_1, A_2 + B_2) = A_2 + B_1 + \min(A_1 - A_2, B_2 - B_1)
        $$
    \end{lemma}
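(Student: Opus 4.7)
The plan is to establish the identity by direct algebraic manipulation, leveraging the translation invariance of the $\min$ operator: for any constant $c$ and real $x, y$, we have $\min(c + x, c + y) = c + \min(x, y)$. This property is immediate from the definition of $\min$ and will do essentially all the work.

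First, I would rewrite each argument of the $\min$ on the left-hand side so that a common additive term $A_2 + B_1$ is exposed. Adding and subtracting $A_2$ in the first argument gives $A_1 + B_1 = (A_2 + B_1) + (A_1 - A_2)$, and adding and subtracting $B_1$ in the second argument gives $A_2 + B_2 = (A_2 + B_1) + (B_2 - B_1)$. Substituting these into $\min(A_1 + B_1, A_2 + B_2)$ produces $\min\bigl((A_2 + B_1) + (A_1 - A_2),\ (A_2 + B_1) + (B_2 - B_1)\bigr)$.

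Then I would factor out the common summand $A_2 + B_1$ using translation invariance, which immediately yields $A_2 + B_1 + \min(A_1 - A_2,\ B_2 - B_1)$, matching the right-hand side. There is no real obstacle here; the statement is purely algebraic. The only point worth a brief justification is the translation invariance step, which can be argued in one line by splitting into the two cases $A_1 - A_2 \leq B_2 - B_1$ and its converse and checking that both sides of the identity agree in each case. Thus the entire proof is two or three lines.
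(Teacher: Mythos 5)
Your proof is correct, and it takes a slightly different route from the paper's. The paper proves the identity by invoking the one-argument reduction $\min(A,B) = A - \min(A-B,0)$ and applying it (in effect twice) to unwind $\min(A_1+B_1, A_2+B_2)$ step by step until the form $A_2 + B_1 + \min(A_1-A_2, B_2-B_1)$ emerges. You instead rewrite each argument to expose the common summand $A_2+B_1$, namely $A_1+B_1 = (A_2+B_1)+(A_1-A_2)$ and $A_2+B_2 = (A_2+B_1)+(B_2-B_1)$, and then pull the common term out by translation invariance $\min(c+x,c+y) = c+\min(x,y)$. Your decomposition is the more transparent one: it makes the right-hand side appear in a single step and requires only the one-line case check for translation invariance, whereas the paper's chain of substitutions is longer and, as written, even contains a harmless notational slip in the intermediate assignments. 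Both arguments are elementary and fully rigorous; yours is marginally shorter and arguably easier to verify at a glance, while the paper's has the minor virtue of resting on the single reusable identity $\min(A,B)=A-\min(A-B,0)$.
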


    \begin{proof}
        For any $A, B \in \mathbb{R}$, the following equation holds.
        $$
        \min (A, B)  = A - \min(A-B, 0).
        $$
        By setting $A=A_1+A_2$ and $B = A_2 + B_2$, we have
        \begin{align*}
            & \min(A_1 + B_1, A_2 +B_2) & \\
            &= A_1 +B_1 - \min(A_1 + B_1 - (A_2 + B_2), 0) & (\because A=A_1+A_2, B = A_2 + B_2)\\
            &= A_1 + B_1 - \min((A_1 - A_2) - (B_2 - B_1), 0) &\\
            &= A_1 +B_1 - (A_1 - A_2 - \min(A_1- A_2, B_2 - B_1)& (\because A = A_1-A_2, B=B_2-B_1)\\
            &= A_2 + B_1 + \min(A_1-A_2, B_2 - B_1).&
        \end{align*}
    \end{proof}

    \begin{theorem}
        For a random variable $X$, define $\ell_X: \mathbb{R} \rightarrow \mathbb{R}$ as,
        \begin{equation}
            \ell_X(s) = \E [ \min(a_1 s + b_1 X + c_1, a_2s+b_2X + c_2)]
        \end{equation}
        where $a_i, b_i, c_i \in \mathbb{R} \ (i=0, 1, 2)$ such that $a_1 \neq a_2$ and $b_1 \neq b_2$.
        Then, $\ell_X$ can be written as
        $$
        \ell_X(s) = As' + B \E[Y] + C + \E[\min(s', Y)]
        $$
        where,$Y=(b_2-b_1)X$ $s'= (a_1-a_2)s + c_1 - c_2$, $A=\frac{a_2}{a_1-a_2}$, $B=\frac{b_1}{b_2-b_1}$, and $C=\frac{a_1c_2- a_2c_1}{a_1-a_2}$.
    \end{theorem}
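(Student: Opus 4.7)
The plan is to apply the preceding lemma pointwise inside the expectation and then match coefficients. Concretely, fix $s \in \R$ and apply the lemma with $A_1 = a_1 s + c_1$, $A_2 = a_2 s + c_2$, $B_1 = b_1 X$, $B_2 = b_2 X$, so that the random quantity inside the expectation in the definition of $\ell_X$ becomes
$$a_2 s + c_2 + b_1 X + \min\bigl((a_1 - a_2)s + c_1 - c_2,\ (b_2 - b_1)X\bigr).$$
The inner minimum is exactly $\min(s', Y)$ with the theorem's notation $s' = (a_1-a_2)s + c_1 - c_2$ and $Y = (b_2-b_1)X$. Taking expectations and using linearity (the first three summands do not involve the random component except through $b_1 X$) yields
$$\ell_X(s) = a_2 s + c_2 + b_1 \E[X] + \E[\min(s', Y)].$$

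It remains to show $a_2 s + c_2 + b_1 \E[X] = A s' + B \E[Y] + C$ for the specified constants. The middle term is immediate: $B \E[Y] = \frac{b_1}{b_2-b_1}(b_2-b_1)\E[X] = b_1 \E[X]$. For the $s$-dependence, $A s' = \frac{a_2}{a_1-a_2}\bigl((a_1-a_2)s + c_1 - c_2\bigr) = a_2 s + \frac{a_2(c_1-c_2)}{a_1-a_2}$, which correctly contributes the $a_2 s$ term. What is left is the constant identity
$$\frac{a_2(c_1 - c_2)}{a_1 - a_2} + C = c_2,$$
i.e.\ $C = c_2 - \frac{a_2(c_1-c_2)}{a_1-a_2} = \frac{c_2(a_1-a_2) - a_2(c_1 - c_2)}{a_1 - a_2} = \frac{a_1 c_2 - a_2 c_1}{a_1 - a_2}$, matching the stated value of $C$.

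There is no real obstacle here: once the lemma is applied the rest is bookkeeping of affine constants. The only place requiring minor care is confirming that the assumption $a_1 \neq a_2$ (and $b_1 \neq b_2$) ensures $A$, $B$, $C$ are well-defined and that $s \mapsto s'$ is a nontrivial affine bijection, so the reduction to $\E[\min(s', Y)]$ is legitimate as an equivalence of functions of $s$.
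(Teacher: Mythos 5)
Your proof is correct and follows essentially the same route as the paper: apply the lemma pointwise with $A_1 = a_1 s + c_1$, $A_2 = a_2 s + c_2$, $B_1 = b_1 X$, $B_2 = b_2 X$, take expectations, and match the affine constants (your explicit verification that $As' + B\E[Y] + C = a_2 s + c_2 + b_1\E[X]$ is the same bookkeeping the paper compresses into its final displayed equality).
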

    \begin{proof}
        From the lemma above, we have
        \begin{align*}
            & \min(a_1 s + b_1 X + c_1, a_2s+b_2X + c_2)\\
            &= \min(\underbrace{a_1 s + c_1}_{A_1} +\underbrace{b_1X}_{B_1}, \underbrace{a_2s+c_2}_{A_2} + \underbrace{b_2X}_{B_2})\\
            &= \min(A_1 + B_1, A_2 + B_2)\\
            &= A_2 + B_1 + \min(A_1 - A_2, B_2 - B_1)\\
            &= a_2s+c_2 + b_1X + \min((a_1-a_2)s + c_1 - c_2, (b_2-b_1)X)\\
            &= As' + BY + C + \min(Y, s').
        \end{align*}
        Thus,
        $$
        \ell_X(s) = As' + B\E[Y] + C + \E[\min(s', Y)].
        $$
    \end{proof}

\end{document}